\def\theequation{\arabic{section}.\arabic{equation}}
\def\section{\@startsection {section}{1}{\z@}{-3.5ex plus -1ex minus -.2ex}{2.3ex plus .2ex}{\large\bf}}
\def\subsection{ \@startsection{subsection}{2}{\z@}{-3.25ex plus -1ex minus -.2ex}{1.5ex plus .2ex}{\normalsize \bf}}
\def\subsubsection{\@startsection{subsubsection}{3}{\z@}{-3.25ex plus -1ex minus -.2ex}{1.5ex plus .2ex}{\normalsize\sl}}
\newtheorem{theorem}{Theorem}[section]
\newtheorem{defn}[theorem]{Definition}
\newtheorem{prop}[theorem]{Proposition}
\newtheorem{lemma}[theorem]{Lemma}
\newtheorem{cor}[theorem]{Corollary}
\newtheorem{rems}[theorem]{Remarks}
\newtheorem{rem}[theorem]{Remark}
\newtheorem{example}[theorem]{Example}
\newenvironment{proof}{\noindent \bf Proof : \rm}{$ \hspace{\stretch{1}} \Box $}
 \newcommand{\bea}{\begin{eqnarray}}
\newcommand{\ena}{\end{eqnarray}}
\newcommand{\beano}{\begin{eqnarray*}}
\newcommand{\enano}{\end{eqnarray*}}
\newcommand{\bei}{\begin{itemize}}
\newcommand{\eni}{\end{itemize}}
\newcommand{\be}{\begin{equation}}
\newcommand{\en}{\end{equation}}
\newcommand{\bedefi}{\begin{defn} \rm }
\newcommand{\findefi}{\end{defn}}
\newcommand{\belem}{\begin{lemma}}
\newcommand{\enlem}{\end{lemma}}
\newcommand{\beprop}{\begin{prop}}
\newcommand{\enprop}{\end{prop}}
\newcommand{\betheo}{\begin{theorem}}
\newcommand{\entheo}{\end{theorem}}
\newcommand{\becor}{\begin{cor}}
\newcommand{\encor}{\end{cor}}
\newcommand{\berem}{\begin{rem} \rm}
\newcommand{\enrem}{\end{rem}}
\newcommand{\berems}{\begin{rems} \rm}
\newcommand{\enrems}{\end{rems}}
\newcommand{\beex}{\begin{example}$\!\!\!$ \rm }
\newcommand{\enex}{ \end{example}}
\newcommand{\norm}[2]{
\left\| #2 \right\|_{#1}
}
\newcommand{\Hil}[0]{\mathcal{H} }
\renewcommand{\le}{\leqslant}
\renewcommand{\leq}{\leqslant}
\renewcommand{\geq}{\geqslant}
 \newcommand{\noi}{\noindent}
 \newcommand{\ov}{\overline}
\newcommand{\RN}{\mathbb{R}}
\newcommand{\ZN}{\mathbb{Z}}
\newcommand{\CN}{{\mathbb C}}
\newcommand{\ran}{{\sf Ran}\,}
\newcommand{\Ker}{{\sf Ker}\,}
\newcommand{\hs}{Hilbert space}
\newcommand{\pip}{{\sc pip}-space}
 \newcommand{\ta}{^\times}
\def\C{{\mathcal C}}
\def\D{{\mathcal D}}
\def\J{{\mathcal J}}
\def\I{{\mathcal I}}
\def\H{{\mathcal H}}
\def\L{{\mathcal L}}
\def\M{{\mathcal M}}
\def\V{{\mathcal V}}
\def\P{{\mathfrak P}}
\def\T{\widehat{T}}
\newcommand{\VJ}{V_{_{\scriptstyle\rm J}}}
\newcommand{\VI}{V_{_{\scriptstyle\rm I}}}
\newcommand{\NN}[0]{\mathbb{N}}
\newcommand{\ud}{\,\mathrm{d}}
\newcommand{\up}{\raisebox{0.7mm}{$\upharpoonright$}}
\newcommand{\cl}[2]{[ {#1}]_{#2}}
\newcommand{\co}{^{\#}}
\newcommand{\ip}[2]{ \langle {#1} |{#2}  \rangle}
 \newcommand{\du}[2]{\langle {#1},{#2}\rangle}
\newcommand{\ipp}[2]{ {\bm\langle}\!\!\!{\bm\langle} {#1} |{#2}{\bm\rangle}\!\!\!{\bm\rangle}_{\scriptscriptstyle\mu}}
\newcommand{\mc}{\mathcal}
\begin{document}

\begin{flushleft}
{\Large \sc Reproducing pairs of measurable functions and partial inner product spaces} \vspace*{7mm}

{\large\sf   J-P. Antoine $\!^{\rm a}$ and C. Trapani $\!^{\rm b}$
}
\\[3mm]
$^{\rm a}$  {\small Institut de Recherche en Math\'ematique et  Physique, Universit\'e catholique de Louvain \\
\hspace*{3mm}B-1348   Louvain-la-Neuve, Belgium\\
\hspace*{3mm}{\it E-mail address}: jean-pierre.antoine@uclouvain.be}
\\[1mm]
$^{\rm b}$   {\small Dipartimento di Matematica e Informatica,
Universit\`a di Palermo, \\
\hspace*{3mm} I-90123 Palermo, Italy\\
\hspace*{3mm} {\it E-mail address}: camillo.trapani@unipa.it}
\end{flushleft}

 \begin{abstract}
We continue the analysis  of reproducing pairs of {weakly} measurable functions, which generalize  continuous frames. 
More precisely, we examine the case where the defining measurable functions take their values in a partial inner product space (\pip).
Several examples, both discrete and continuous, are presented. 
\medskip

\textbf{AMS classification numbers:} 41A99, 46Bxx, 46C50, 46Exx
\medskip

\textbf{Keywords:} Reproducing pairs, continuous frames, upper and lower semi-frames, 
partial inner product spaces,  lattices of Banach spaces

\end{abstract}

\section{Introduction}
\label{sec-intro}

Frames and their relatives are most  often considered in the discrete case, for instance in signal processing \cite{christ}.
However, continuous frames have also been studied and offer interesting mathematical problems. They have been introduced originally by Ali, Gazeau and one of us
\cite{squ-int,contframes} and also, independently, by Kaiser \cite{kaiser}. Since then, several papers dealt with various aspects of the
concept, see for instance
\cite{gab-han} or \cite{rahimi}.
 However, there may occur situations where it is impossible to satisfy both frame bounds.

Therefore, several generalizations of frames have been introduced. Semi-frames
\cite{ant-bal-semiframes1,ant-bal-semiframes2}, for example, are obtained when functions    only satisfy one of the two frame bounds. 
It turns out that a large portion of frame theory can be extended to this larger framework,
in particular the notion of duality.

More recently, a new generalization of frames was introduced by Balazs and Speckbacher \cite{speck-bal}, namely, reproducing
pairs. Here, given a measure space $(X,  \mu)$, one considers a couple of weakly measurable functions $(\psi, \phi)$,   instead of a single mapping,
and one studies  the correlation between the two (a precise definition is given below).
This definition also includes the original definition of a continuous frame \cite{squ-int,contframes} given the choice $\psi= \phi$.   
The increase of  freedom in choosing the mappings $\psi$ and $\phi$, however, leads to the problem of characterizing
the range of the analysis operators, which in general need no more be contained in $L^2(X, \ud \mu)$, as in the frame case. Therefore,
  we extend the theory to the case where the weakly measurable functions take their values in a partial inner product space (\pip). We discuss first the case of a rigged Hilbert space, then we consider a genuine \pip.  {We conclude with two natural families of examples, namely,  Hilbert scales and several \pip s generated by the family  $\{L^p(X, \ud \mu), 1\leq p \leq \infty\}$}.

 \section{Preliminaries}
 \label{sec-prel}
 
Before proceeding, we list our definitions and  conventions. The framework is
 a (separable) Hilbert space $\H$, with the inner product $\ip{\cdot}{\cdot}$ linear in the first factor.
Given an operator $A$ on $\H$, we denote its domain by $D(A)$, its range by $\ran(A)$ and its kernel by ${\Ker}(A)$.
$GL(\H)$ denotes the set of all invertible bounded operators on $\H$ with bounded inverse.
Throughout the paper, we will consider weakly measurable functions $\psi: X \to \H$, where $(X,\mu)$ is a  locally compact  space with a Radon measure $\mu$,  {that is, $\ip{\psi_x}{f}$ is $
\mu-$measurable for every $f\in\H$}.
Then the weakly measurable function $\psi$ is a \emph{continuous frame} if there exist constants  ${\sf m} > 0$  and ${\sf M}<\infty$  (the  frame bounds) such that
\be\label{eq:frame}
{\sf m}  \norm{}{f}^2 \leq    \int_{X}  |\ip{f}{\psi_{x}}| ^2 \, \ud \mu(x)  \le {\sf M}  \norm{}{f}^2 ,  \forall \, f \in \H.
\end{equation}
Given the continuous frame $\psi$, the  \emph{analysis} operator ${\sf C}_{\psi}: \Hil \to L^{2}(X, \ud\mu)$
\footnote{As usual, we identify a function $\xi$ with its residue class in  $L^{2}(X, \ud\mu)$.}
is defined as
\be\label{eq:csmap}
({\sf C}_{\psi}f)(x) =\ip{f} {\psi_{x}}, \; f \in \H,
\end{equation}
 and the corresponding \emph{synthesis operator} ${\sf C}_{\psi}^\ast: L^{2}(X, \ud\mu) \to \H$ as
 (the integral being understood in the weak sense,  as usual)
\be\label{eq:synthmap}
{\sf C}_{\psi}^\ast \xi =  \int_X  \xi(x) \,\psi_{x} \; \ud\mu(x), \mbox{ for} \;\;\xi\in L^{2}(X, \ud\mu).
\end{equation}
We set   $S:={\sf C}_{\psi}^* {\sf C}_{\psi}$, {which is self-adjoint.}

More generally, the couple of weakly measurable functions $(\psi, \phi)$   is called a \emph{reproducing pair} if \cite{ast-reprodpairs}
\\[1mm]
(a) The sesquilinear form
\be\label{eq:form}
\Omega_{\psi, \phi}(f,g) = \int_X \ip{f}{\psi_x} \ip{\phi_x}{g} \ud\mu(x)
\en
 is well-defined and bounded on $\H \times \H$, {that is, $| \Omega_{\psi, \phi}(f,g) | \leq c \norm{}{f}\norm{}{g}$,} {  for some $c>0$.
 \\[1mm]
(b) The corresponding bounded (resolution) operator $S_{\psi, \phi}$ belongs to $GL(\H)$.
\medskip

Under these hypotheses, one has
\be\label{eq-Sf}
S_{\psi, \phi}f = \int_X \ip{f}{\psi_x} \phi_x  \ud\mu(x) , \; \forall\,f\in\H,
\en
the integral on the r.h.s. being defined in weak sense.
If $\psi = \phi$, we recover the notion  of continuous frame, so that we have indeed a genuine generalization of the latter.
 {Notice that $S_{\psi, \phi}$ is in general neither positive, nor self-adjoint, since $S_{\psi, \phi}^\ast= S_{\phi, \psi}$ .
However, if  $\psi,\phi$  is  reproducing pair, then $\psi , S_{\psi, \phi}^{-1}\phi$ is a dual pair, that is, the corresponding resolution operator is the identity.
Therefore, there is no restriction of generality to assume that $S_{\phi, \psi}=I$ \cite{speck-bal}. 
 {The worst that can happen is to replace some norms by equivalent ones}.

 }

In \cite{ast-reprodpairs}, it has been shown that each weakly measurable function $\phi$ generates an intrinsic pre-Hilbert space $V_\phi(X, \mu)$ and, moreover, a reproducing pair $(\psi, \phi)$ generates two Hilbert spaces, 
$V_\psi(X, \mu)$ and $V_\phi(X, \mu)$, conjugate dual of each other with respect to the $L^2(X, \mu)$ inner product. Let us briefly sketch that construction, that we will generalize further on.

Given    a weakly measurable function $\phi$, let us denote by  $\V_\phi(X, \mu) $ the space of all  measurable functions  $\xi  : X \to \CN$ such that the integral $\int_X  \xi(x)  \ip{\phi_x}{g} \ud\mu(x)$
exists for every $g\in \H$ and defines a bounded conjugate linear functional on $\H$, i.e., $\exists\; c>0$ such that
\be\label{eq-Vphi}
\left|  \int_X  \xi(x)  \ip{\phi_x}{g} \ud\mu(x) \right| \leq c \norm{}{g}, \; \forall\, g \in \H.
\en
{Clearly, if  $(\psi,\phi)$ is a reproducing pair, all functions $\xi(x)  = \ip{f}{\psi_x} = (C_\psi f)(x)$ 
belong to  $\V_\phi(X, \mu) $.}

By the Riesz lemma, we can define a linear map $T_\phi : \V_\phi(X, \mu) \to \H$ by the following weak relation
\be\label{eq:Tphi2}
\ip{T_\phi   \xi}{g}    =\int_X  \xi(x)  \ip{\phi_x}{g} \ud\mu(x) , \; { \forall} \, \xi \in\V_\phi(X, \mu), g\in\H.
\en
Next, we   define the  vector space
$$
V_\phi(X, \mu)= \V_\phi(X, \mu)/{\Ker}\,T_\phi
$$
and equip it with the norm 
\be \label{eq-normphi}
\norm{\phi}{\cl{\xi}{\phi}} := \sup_{\norm{}{g}\leq 1 } \left|  \int_X  \xi(x)  \ip{\phi_x}{g} \ud\mu(x)  \right|
=  \sup_{\norm{}{g}\leq 1 }\left| \ip{T_\phi   \xi}{g} \right|,
\en
where we have put $\cl{\xi}{\phi}= \xi + {\Ker}\,T_\phi$ for $\xi\in \V_\phi(X, \mu)$.
Clearly, $V_\phi(X, \mu)$ is a normed space.
However, the norm $\norm{\phi}{\cdot}$ is in fact Hilbertian, that is, it derives from an inner product, as can be seen as follows. First, it turns out that the map $\T_\phi:V_\phi(X,\mu)\rightarrow \H$, $\T_\phi[\xi]_\phi:= T_\phi \xi$
is a well-defined isometry  of $V_\phi(X, \mu)$ into $\H$. Next, one may define on $V_\phi(X,\mu)$ an inner product by setting
$$
\ip{\cl{\xi}{\phi}}{\cl{\eta}{\phi}}_{(\phi)}: 
=\ip{\T_\phi\cl{\xi}{\phi}}{\T_\phi\cl{\eta}{\phi}},\; \cl{\xi}{\phi}, \cl{\eta}{\phi}{ \in V_\phi(X,\mu)},
$$
and one shows that the norm defined by $ \ip{\cdot}{\cdot}_{(\phi)}$ coincides with the norm $\| \cdot\|_\phi$ defined in \eqref{eq-normphi}. One has indeed
$$
\norm{(\phi)}{\cl{\xi}{\phi}}= \norm{}{\T_\phi\cl{\xi}{\phi}}= \norm{}{T_\phi\xi}=  \sup_{\norm{}{g}\leq 1 }\left| \ip{T_\phi   \xi}{g} \right|
= \norm{\phi}{\cl{\xi}{\phi}}.
$$ 
Thus $V_\phi(X, \mu) $ is a pre-Hilbert  space. 

With these notations, the main result of \cite{ast-reprodpairs} reads as
\betheo\label{theo-dual}
If $(\psi,\phi)$ is a reproducing pair,  the spaces $V_\phi(X, \mu)$ and $V_\psi(X, \mu)$ are both Hilbert spaces, {conjugate dual} of each other with respect to the sesquilinear form 
\be \label{eq_sesq}
\ipp{\xi}{\eta}:= \int_X \xi(x)  \ov{\eta(x) } \ud\mu(x),
\en  
\entheo
which coincides with the inner product of $L^2(X, \mu)$ whenever the latter makes sense. This is true, in particular, for
$\phi = \psi$, since then $\psi$ is a continuous frame and $V_\psi(X, \mu)$ is a closed subspace of $L^2(X, \mu)$.

In this paper, we will consider reproducing pairs in the context of \pip s. The motivation is the following.
Let $(\psi, \phi)$ be a reproducing pair. By definition,
\be\label{eq-defS}
\ip{S_{\psi,\phi}f}{g}=\int_X \ip{f}{\psi_x}\ip{\phi_x}{g}\ud\mu(x) =  \int_X C_\psi f (x) \; \ov{C_\phi g(x)}  \ud\mu(x)
\en
is well defined for all $f,g\in\H$. The r.h.s. coincides with the sesquilinear form \eqref{eq_sesq}, that is,
 the $L^2$ inner product, but generalized, since in general $C_\psi f ,C_\phi g$
need not belong to   $L^2(X,\ud\mu)$.
If, following \cite{speck-bal}, we make the innocuous assumption that $\psi$ is uniformly bounded,  i.e.,
$\sup_{x\in X}\norm{\H}{\psi_x } \leq c$ for some $c>0$
(often $\norm{\H}{\psi_x}$ = const., e.g. for wavelets or coherent states),
then $(C_\psi f)(x) = \ip{f}{\psi_x} \in L^\infty(X,\ud \mu)$.

These two facts suggest to take   $\ran C_\psi$ within some \pip\ {of measurable functions, possibly} related to the $L^p$ spaces. We shall present several possibilities in that direction in Section \ref{sec-Lp}.

\section{Reproducing pairs and RHS}
\label{sec-RHS}

We begin with the simplest example of a \pip, namely, a rigged Hilbert space (RHS). Let indeed $\D[t] \subset \H \subset \D^\times [t^\times]$ be a RHS with $\D [t]$ reflexive (so that $t$ and $t^\times$ coincide with the respective  Mackey topologies).
Given  a measure space $(X, \mu)$, we denote by $\du{\cdot}{\cdot}$ the sesquilinear form expressing the duality between $\D$ and $\D^\times$.
 As usual, we suppose that this  sesquilinear form extends the inner product of $\D$ (and $\H$). This allows to build the triplet above.
Let $x\in X \mapsto \psi_x, \,x\in X \mapsto \phi_x$ be weakly measurable functions from $X$ into $\D^\times$.

Instead of \eqref{eq:form}, we consider he sesquilinear form
\be\label{eq:formD}
\Omega^{\D}_{\psi, \phi}(f,g) = \int_X \du{f}{\psi_x} \du{\phi_x}{g} \ud\mu(x), \; f,g \in \D,
\en
and we assume that it is jointly continuous on $\D \times \D$, that is $\Omega^{\D}\in {\sf B}(\D,\D)$ in the notation of \cite[Sec.10.2]{ait-book}. Writing
\be\label{eq-def_altS}
\du{S_{\psi,\phi}f}{g}:=\int_X \du{f}{\psi_x}\du{\phi_x}{g}\ud\mu(x) , \; \forall\, f,g\in\D,
\en
we see that  the operator $S_{\psi,\phi}$ belongs to  $\L(\D, \D^\times)$, the space of all continuous linear maps from $D$ into  $\D^\times$.}

\subsection{A Hilbertian approach}
\label{subsec-hilb}

We first  assume that the sesquilinear form $\Omega^{\D}$
 is well-defined and bounded on $\D \times \D$ in the topology of $\H$.
 Then $\Omega^{\D}_{\psi, \phi}$ extends to a bounded sesquilinear form on $\H\times \H$, denoted by the same symbol.

The definition of the space $\V_\phi(X,\mu)$ must be modified as follows. Instead of \eqref{eq-Vphi}, we  suppose that the integral below exists and defines a  conjugate linear functional   on $\D$, bounded in the topology of $\H$, i.e.,
\be\label{eq-sesqform}
\left|  \int_X  \xi(x)  \du{\phi_x}{g} \ud\mu(x) \right| \leq c \norm{}{g}, \; \forall\, g \in \D.
\en
Then the functional  extends to  a bounded conjugate linear functional on $\H$, since $\D$ is dense in $\H$.
Hence, for every $\xi \in \V_\phi(X, \mu) $, there exists a unique vector $h_{\phi,\xi}\in \H$ such that
$$
\int_X  \xi(x)  \du{\phi_x}{g} \ud\mu(x) = \ip{h_{\phi,\xi}}{g}, \quad \forall g \in \D.
$$
It is worth remarking that this interplay between the two topologies on $\D$ is similar to the approach of Werner \cite{werner}, who treats $L^2$ functions as distributions, thus identifies the $L^2$ space as the dual of $\D = \C_0^\infty$ with respect to the norm topology. And, of course, this is fully in the spirit of \pip s.

 {Then, we can define a linear map $T_\phi :  \V_\phi(X, \mu) \to \H$ by
\be\label{eq:TphiD}
 T_\phi   \xi = h_{\phi,\xi}\in \H, \;\forall\,\xi\in \V_\phi(X, \mu),
\en
in the following weak sense
$$
\ip{T_\phi   \xi}{g} = \int_X  \xi(x)  \du{\phi_x}{g} \ud\mu(x) , \; g\in\D, \xi\in \V_\phi(X, \mu).
$$
In other words we are \emph{imposing} that $\int_X \xi(x)\phi_x \ud\mu(x)$ converge  weakly to an element of $\H$.}

 The rest proceeds as before. We consider the space $ V_\phi(X, \mu)= \V_\phi(X, \mu)/{\Ker}\,T_\phi$, with the norm
 $\norm{\phi}{\cl{\xi}{\phi}}=\norm{}{T_\phi \xi}$,  where, for $\xi\in V_\phi(X, \mu)$, we have put $\cl{\xi}{\phi}= \xi + {\Ker}\,T_\phi$. 
Then $ V_\phi(X, \mu)$  is a pre-Hilbert space for that norm. 

Note that $\phi$ was called in \cite{ast-reprodpairs} \emph{$\mu$-independent} whenever ${\Ker}\,T_\phi = \{0\}$. In that case, of course, $V_\phi = \V_\phi $.

Assume, in addition, that the corresponding  bounded operator $S_{\psi,\phi}$ is an element of $GL(\H)$.
Then $(\psi,\phi)$ is a reproducing pair and  Theorem  3.14 of  \cite{ast-reprodpairs} remains true, that is,
 \betheo\label{theo-dual1}
If $(\psi,\phi)$ is a reproducing pair,  the spaces $V_\phi(X, \mu)$ and $V_\psi(X, \mu)$ are both Hilbert spaces, {conjugate dual} of each other with respect to the sesquilinear form 
\be\label{eq-dual}
\ip{\cl{\xi} {\phi}} {\cl{\eta} {\psi}}= \int_X \xi(x)  \ov{\eta(x) } \ud\mu(x), \; \forall\, {\xi}  \in \V_\phi(X, \mu),\,  {\eta} \in \V_\psi(X, \mu).
\en
\entheo

 \beex To give a trivial example, consider the Schwartz rigged Hilbert space ${\mathcal S}(\RN) \subset L^2(\RN, \ud x)
 \subset {\mathcal S}^\times(\RN)$, $(X,\mu) = (\RN, \ud x)$,
 $\psi_x (t)= \phi_x(t)= \frac{1}{\sqrt{2\pi}} e^{ixt}$ . Then  {$C_\phi f= \widehat f$}, the Fourier transform,
 so that $\ip{f}{\phi(\cdot)}\in L^2(\RN, \ud x)$.
 In this case
 $$
 \Omega_{\psi,\phi}(f,g)= \int_\RN \du{f}{\psi_x}\du{\phi_x}{g}\ud x= \ip{\widehat f}{\widehat g}=\ip{f}{g}, \;\forall f,g \in {\mathcal S}(\RN),
 $$
 and $V_\phi(\RN,\ud x)=L^2(\RN, \ud x)$.
 \enex

\subsection{The general case}
\label{sec-alt}

In the general case, we only assume that the form $\Omega$ is jointly continuous on $\D \times \D$, with no other regularity requirement.   
{In that case,   the vector space  $\V_\phi(X, \mu)$ must be defined differently.
Let the topology of $\D$ be given by a {directed family ${\P}$ of   seminorms}.  Given    a weakly measurable function $\phi$, we denote again by  $\V_\phi(X, \mu) $ the space of all  measurable functions  
  $\xi  : X \to \CN$ such that the integral $\int_X  \xi(x)  \du{\phi_x}{g} \ud\mu(x)$
exists for every $g\in \D$ and defines a continuous conjugate linear functional on $\D$, namely, there exists $c>0$ and a seminorm ${\sf p} \in \P$ such that
$$
\left|\int_X  \xi(x)  \du{\phi_x}{g} \ud\mu(x)\right| \leq c  \, {\sf p}(g).
$$
This in turn determines a linear map $T_\phi : \V_\phi(X, \mu) \to \D^\times$ by the following   relation
\be\label{eq:Tphi3}
\du{T_\phi   \xi}{g}    =\int_X  \xi(x)  \du{\phi_x}{g} \ud\mu(x) , \; { \forall} \, \xi \in\V_\phi(X, \mu), g\in\D.
\en
Next, we   define as before the   vector space
$$
V_\phi(X, \mu)= \V_\phi(X, \mu)/{\Ker}\,T_\phi,
$$
and we  put again $\cl{\xi}{\phi}= \xi + {\Ker}\,T_\phi$ for $\xi\in \V_\phi(X, \mu)$.

Now we need to introduce a topology on $V_\phi(X, \mu)$. We proceed as follows.
{Let $\M$ be a bounded subset of $\D[t]$. Then we define
 \be
  {\widehat {\sf p}_\M}( \cl{\xi}\phi) := \sup_{g\in \M}\left| \du{T_\phi   \xi}{g} \right|.
   \en

{ 
That is, we are defining the topology of $V_\phi(X,\mu)$ by means of the strong dual topology $t^\times$ of $\D^\times$ which we recall   is defined by the seminorms
$$ 
\|F\|_\M = \sup_{g \in \M} |\ip{F}{g}, \quad F\in \D^\times,
$$
where $\M$ runs over the family of bounded subsets of $\D[t]$. As said above, the reflexivity of $\D$ entails that $ t^\times$ is equal to the Mackey topology $\tau(\D^\times,\D)$.
More precisely,
\belem
The map $\T_\phi:V_\phi(X,\mu)\rightarrow \D^\times$, $\T_\phi[\xi]_\phi:= T_\phi \xi$
is a well-defined linear map  of $V_\phi(X, \mu)$ into $\D^\times$ and, for every bounded subset $\M$ of $\D[t]$, one has
$${\widehat {\sf p}_\M}( \cl{\xi}\phi) =\|T_\phi \xi\|_\M, \quad \forall \xi \in \V_\phi(X, \mu)$$
\enlem
The latter equality obviously implies the continuity of $T_\phi$.

Next  we
investigate the dual {$V_\phi(X,\mu)^\ast$} of the space $V_\phi(X, \mu) $, that is, the set of continuous linear  functionals on $V_\phi(X, \mu) $.
First, we have to choose a topology for $V_\phi(X,\mu)^\ast$. As usual we take the strong dual topology. This is defined by the family of seminorms
$$ {\sf q}_{\mc R} (F):= \sup_{\cl{\xi}\phi \in {\mc R}}|F(\cl{\xi}\phi)|,$$ where ${\mc R}$ runs over the bounded subsets of $V_\phi(X, \mu) $.

\medskip

\betheo\label{theo23} Assume that $\D[t]$ is a reflexive space and
let $\phi$ be a  weakly measurable function.
If $F$ is a continuous linear functional on $V_\phi(X, \mu) $, then there exists a unique {$g\in\D$}  such that
\begin{equation}\label{repres-functional2}
F(\cl{\xi}{\phi}) =  \int_X  \xi(x)  {\du{\phi_x}{g}} \ud\mu(x) , \; \forall\, \xi \in V_\phi(X, \mu)
\end{equation}
Moreover, every $g\in \H$ defines a continuous functional  $F$ on $V_\phi(X,\mu)$ with {$\norm{\phi^\ast}{F}  \leq \norm{}{g}$},  by \eqref{repres-functional2}.
\entheo
\begin{proof}
Let $F\in V_\phi(X, \mu)^\ast$. Then, there exists a bounded subset $\M$ of $\D[t]$ such that
$$
| F(\cl{\xi}{\phi}) | \leq {\widehat {\sf p}_\M}( \cl{\xi}\phi)={\norm{}{T_\phi \xi}}_\M, \; \forall \, \xi \in \V_\phi(X, \mu).
$$
Let ${\sf M}_\phi:= \{ T_\phi \xi : \xi \in \V_\phi(X, \mu) \} = \ran  \T_\phi $. Then  ${\sf M}_\phi$ is a vector subspace of $\D^\times$.

Let $\widetilde F$ be the functional defined on ${\sf M}_\phi$ by
$$
{\widetilde F}(T_\phi \xi) := F(\cl{\xi}{\phi}), \; \xi \in \V_\phi(X, \mu).
$$
We  notice that $\widetilde F$ is well-defined. Indeed, if $T_\phi \xi = T_\phi \xi'$, then $\xi- \xi'\in {\Ker\,}T_\phi$. Hence, $\cl{\xi}{\phi}=\cl{\xi'}{\phi}$
and $F(\cl{\xi}{\phi})=F(\cl{\xi'}{\phi})$

Hence, $\widetilde F$ is a continuous linear functional   on ${\sf M}_\phi$ which can be extended (by the Hahn-Banach theorem) to a continuous linear functional on $\D^\times$. Thus, in virtue of the reflexivity of $\D$,  there exists a vector {$g\in\D$}  such that
$$
{\widetilde F}(T_\phi \xi)  = \du{\T_\phi \cl{\xi}{\phi}}  {g } = \du{T_\phi  \xi}{g } = \int_X  \xi(x)  {\du{\phi_x}{g}} \ud\mu(x) .
$$
In conclusion,
$$
F(\cl{\xi}{\phi}) =  \int_X  \xi(x)  {\du{\phi_x}{g}} \ud\mu(x),\; \forall\, \xi \in \V_\phi(X, \mu) .
$$
Moreover, every $g\in \D$ obviously defines a continuous linear functional $F$ on $V_\phi(X, \mu)$   by \eqref{repres-functional2}. In addition, if ${\mc R}$ is a bounded subset of $V_\phi(X, \mu) $, we have
\begin{align*}
{\sf q}_{\mc R} (F)  &= \sup_{\cl{\xi}\phi \in {\mc R}}|F(\cl{\xi}\phi)|=\sup_{\cl{\xi}\phi \in {\mc R}}\left| \int_X  \xi(x)  {\du{\phi_x}{g}} \ud\mu(x)\right|
\\
&= \sup_{\cl{\xi}\phi \in {\mc R}}|{\du{T_\phi   \xi}{g}}|\leq
\sup_{\cl{\xi}\phi \in {\mc R}}  {\widehat {\sf p}_\M}( \cl{\xi}\phi),
\end{align*}
for any bounded subset $\M$ of $\D$ containing $g$.
\end{proof}

\medskip

In the present context, the analysis operator $C_\phi$ is defined in the usual way, given in \eqref{eq:csmap}.
Then, particularizing the discussion of Theorem \ref{theo23} to the functional ${\du{C_\phi g}{\cdot}}$, one can interpret the analysis operator $C_\phi$ as a continuous operator from $\D$ to $V_\phi(X,\mu)^\ast$.
As in the case of frames or semi-frames, one may characterize the synthesis operator in terms of the analysis operator.

 \begin{prop}\label{prop210}
 {Let} $\phi$ be weakly measurable, then $\widehat T_\phi\subseteq C_\phi^\ast$. If, in addition, $V_\phi(X,\mu)$ is reflexive,
then $\T_\phi^\ast=C_\phi$. Moreover, $\phi$ is  $\mu$-total {(i.e. $\Ker C_\phi = \{0\}$)} if and only if
$\ran  \widehat T_\phi$ is dense in $\D^\times$.
\end{prop}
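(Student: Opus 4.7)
The plan is to handle all three assertions by exploiting a single identity. Combining the definition of $C_\phi$ as a functional on $V_\phi(X,\mu)$, as provided by Theorem~\ref{theo23}, with the defining relation \eqref{eq:Tphi3} of $T_\phi$, one has
\[
(C_\phi g)(\cl{\xi}{\phi}) \;=\; \int_X \xi(x)\,\du{\phi_x}{g}\,\ud\mu(x) \;=\; \du{\widehat T_\phi \cl{\xi}{\phi}}{g}
\]
for every $g\in\D$ and every $\cl{\xi}{\phi}\in V_\phi(X,\mu)$. Each assertion reduces to reading this identity in a different direction.

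For $\widehat T_\phi\subseteq C_\phi^\ast$, I would view $\cl{\xi}{\phi}$ as an element of the bidual $V_\phi(X,\mu)^{\ast\ast}$ through the canonical embedding. Since $\D$ is reflexive, $C_\phi^\ast$ maps $V_\phi(X,\mu)^{\ast\ast}$ into $\D^\times$, and the displayed identity yields
\[
\du{C_\phi^\ast\cl{\xi}{\phi}}{g} \;=\; \cl{\xi}{\phi}(C_\phi g) \;=\; \du{\widehat T_\phi\cl{\xi}{\phi}}{g}, \quad \forall\,g\in\D,
\]
so that $C_\phi^\ast$ restricted to $V_\phi(X,\mu)\subseteq V_\phi(X,\mu)^{\ast\ast}$ is exactly $\widehat T_\phi$.

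For the second assertion, reflexivity of $V_\phi(X,\mu)$ upgrades this inclusion to the equality $\widehat T_\phi=C_\phi^\ast$. Taking transposes and invoking the reflexivity of $\D$ together with that of $V_\phi(X,\mu)^\ast$ (which follows from reflexivity of $V_\phi(X,\mu)$), one obtains $\widehat T_\phi^\ast=C_\phi^{\ast\ast}=C_\phi$. Equivalently, a direct computation again relying on the same identity gives, for $g\in\D$,
\[
(\widehat T_\phi^\ast g)(\cl{\xi}{\phi}) \;=\; \du{\widehat T_\phi\cl{\xi}{\phi}}{g} \;=\; (C_\phi g)(\cl{\xi}{\phi}).
\]

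For the third assertion, the same identity gives $g\in\Ker C_\phi$ iff $\du{\widehat T_\phi\cl{\xi}{\phi}}{g}=0$ for every $\cl{\xi}{\phi}\in V_\phi(X,\mu)$, which says precisely that $g\in(\ran\widehat T_\phi)^\circ$ (polar in $\D$). The reflexivity of $\D$ ensures that $t^\times$ coincides with the Mackey topology $\tau(\D^\times,\D)$, whose continuous dual is $\D$, so the bipolar theorem yields $\ov{\ran\widehat T_\phi}=(\ran\widehat T_\phi)^{\circ\circ}$. Consequently $\ran\widehat T_\phi$ is dense in $\D^\times$ if and only if $(\ran\widehat T_\phi)^\circ=\{0\}$, i.e., if and only if $\Ker C_\phi=\{0\}$. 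The only real obstacle is the bookkeeping of the various duality pairings (identifying $(\D^\times)^\ast$ with $\D$, and handling the canonical embedding $V_\phi(X,\mu)\hookrightarrow V_\phi(X,\mu)^{\ast\ast}$); once these are set up via Theorem~\ref{theo23}, no substantive analytic difficulty remains.
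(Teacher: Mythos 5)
Your proposal is correct and follows essentially the same route as the paper: the central identity $\du{C_\phi g}{\cl{\xi}{\phi}}=\du{\widehat T_\phi\cl{\xi}{\phi}}{g}$ (the paper's equation \eqref{eq-adjoint}) is used to identify $\widehat T_\phi$ with the restriction of $C_\phi^\ast$ to $V_\phi(X,\mu)\hookrightarrow V_\phi(X,\mu)^{\ast\ast}$, reflexivity upgrades this to equality, and the totality statement follows from the annihilator/bipolar duality $(\ran\widehat T_\phi)^{\bot}=\Ker C_\phi$ together with the reflexivity of $\D$. Your treatment is, if anything, slightly more explicit than the paper's about passing from $\widehat T_\phi=C_\phi^\ast$ to $\widehat T_\phi^\ast=C_\phi$ and about why $t^\times$-density reduces to the vanishing of the polar.
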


\begin{proof}
 {
As $C_\phi:\D\rightarrow V_\phi(X,\mu)^\ast$ is a continuous operator, it
 has a continuous adjoint
$C_\phi^\ast : V_\phi(X,\mu)^{\ast\ast} \to \H$ \cite[Sec.IV.7.4]{schaefer}. Let $C_\phi^\sharp :=  C_\phi^\ast \up V_\phi(X,\mu)$. Then $C_\phi^\sharp = \widehat T_\phi$ since,
for every $f\in\D$, $[\xi]_\phi\in V_\phi(X,\mu)$,
\be\label{eq-adjoint}
{\du{C_\phi f}{[\xi]_\phi}}
=\int_X {\du{f}{\phi_x}} \overline{\xi(x)}\ud\mu(x)={\du{f}{\widehat T_\phi [\xi]_\phi}}.
\en
If $ V_\phi(X,\mu)$ is reflexive,  we have, of course,  $C_\phi^\sharp =C_\phi^\ast = \widehat T_\phi$.}

If $\phi$ is not $\mu$-total, then there exists $f\in \D$, $f\neq0$ such that
$(C_\phi f)(x)=0$ for a.e. $x\in X$. Hence, $f\in (\ran \widehat T_\phi)^\bot:= \{f\in \D: \ip{F}{f}=0, \, \forall F\in \ran \widehat T_\phi\}$ by \eqref{eq-adjoint}.
 Conversely, if $\phi$ is $\mu$-total, as $(\ran \widehat T_\phi)^\bot=\Ker  C_\phi=\{0\}$, by the reflexivity of $\D$ and $\D^\times$, it follows that $\ran \widehat T_\phi$ is dense in $\D^\times$.
\end{proof}

\medskip

 In a way similar to what we have done above, we can define the space $V_\psi(X, \mu)$, its topology, the  {residue} classes $\cl{\eta}{\psi}$, the operator $T_\psi$, etc, replacing $\phi$ by $\psi$.
Then, $V_\psi(X, \mu)$ is a  locally convex space.

\betheo \label{representation-of-F}
Under the condition \eqref{eq:formD},  every bounded linear functional $F$ on $V_\phi(X, \mu)$,  {i.e.,  $F\in V_\phi(X, \mu)^\ast$,} can be represented as
\be\label{eq-dual2}
F(\cl{\xi}{\phi}) = \int_X \xi(x)  \ov{\eta(x) } \ud\mu(x), \; \forall\,\cl{\xi}{\phi}\in V_\phi(X, \mu),
\en
with $\eta\in \V_\psi(X, \mu)$. The  {residue} class $\cl{\eta}{\psi}\in V_\psi(X, \mu)$ is uniquely determined.
\entheo
\begin{proof}
By Theorem \ref{theo23}, we have the representation
$$
F(\xi) =  \int_X  \xi(x)  {\du{\phi_x}{g}} \ud\mu(x)  .
$$
It is easily seen that $\eta(x) = {\du{g}{\phi_x}} \in \V_\psi(X, \mu)$.

It remains to prove uniqueness. Suppose that
$$
F(\xi) = \int_X \xi(x)  \ov{\eta'(x) } \ud\mu(x) .
$$
Then
$$
\int_X \xi(x)  (\ov{\eta'(x)}- \ov{\eta(x)}) \ud\mu(x) =0.
$$
Now the function $\xi(x) $ is arbitrary. Hence, taking in particular for $\xi(x) $ the functions  $\du{f}{\psi_x} , f\in\D$,
we get $\cl{\eta}{\psi}=\cl{\eta'}{\psi}$.
\end{proof}

\medskip

The lesson of the previous statements is that the map
\be\label{map-j}
  j : F\in V_\phi(X, \mu)^\ast \mapsto \cl{\eta}{\psi} \in V_\psi(X, \mu)
\en
is well-defined and conjugate linear. On the other hand, $j(F) = j(F')$ implies easily $F=F'$.
Therefore $V_\phi(X, \mu)^\ast$ can be identified with a   closed subspace  of $\ov{V_\psi}(X, \mu):=\{\ov{\cl{\xi}{\psi}} : \xi\in \V_\psi(X, \mu)\}$, the conjugate space of $V_\psi(X, \mu)$.

Working in the framework of Hilbert spaces, as in Section \ref{subsec-hilb}, we proved in \cite{ast-reprodpairs} that the spaces $V_\phi(X, \mu)^\ast$ and $\ov{V_\psi}(X, \mu)$ can be identified.
The conclusion was
that if $(\psi,\phi)$ is a reproducing pair,  the spaces $V_\phi(X, \mu)$ and $V_\psi(X, \mu)$ are both Hilbert spaces, {conjugate dual} of each other with respect to the sesquilinear form \eqref{eq-dual}.
And if $\phi$ and $\psi$ are also  $\mu$-total, then the converse statement holds true. 

In the present situation, however, a result of this kind cannot be proved with techniques similar to those adopted in \cite{ast-reprodpairs}, which are  {specific} of Hilbert spaces. {In particular, the  condition (b), $S_{\psi,\phi}\in GL(\H)$, which was essential in the proof of 
\cite[Lemma 3.11]{ast-reprodpairs}, is now missing, and it is not clear by what regularity condition it should replaced.}

{However,   \emph{assume} that $\ran \widehat C_{\psi,\phi}[\norm{\phi}{\cdot}]=V_\phi(X,\mu)[\|\cdot\|_\phi]$
and $\ran \widehat C_{\phi,\psi}[\norm{\psi}{\cdot}]=V_\psi(X,\mu)[\|\cdot\|_\psi]$, where  we have defined the operator  $\widehat{C}_{\phi ,\psi}  :  \H \to V_\psi(X, \mu) $ by
 $\widehat{C}_{\phi ,\psi} f:= [C_\phi f]_\psi $ and  similarly for $\widehat C_{\psi,\phi}$. Then the proof of \cite[Theorem 3.14]{ast-reprodpairs} works and the same result may be obtained. This is, however, a strong and non-intuitive assumption.}

\section{Reproducing pairs and genuine \pip s}
\label{sec-PIP}

In this section, we will consider the case where our measurable functions take their values in a genuine \pip. However, for simplicity, we will restrict ourselves to a lattice of Banach spaces (LBS) or a lattice of Hilbert spaces (LHS) \cite{pip-book}.  {For the convenience of the reader, we have summarized in the Appendix the basic notions concerning LBSs and LHSs.}

Let $(X, \mu)$ be a locally compact, $\sigma$-compact measure space. Let $V_J = \{V_p, p\in J\}$ be a LBS  or a LHS of measurable functions with the property 
\be\label{defVp}
\xi \in V_p, \;\eta \in V_{\ov p} \;\Longrightarrow\; \xi\overline{\eta} \in L^1 (X, \mu)\qquad \mathrm{and}
 \qquad  \left| \int_X \xi(x) \overline{\eta (x)} \ud\mu (x)\right| \leq \|\xi\|_p \,  \|\eta\|_{\ov p}.
 \en
Thus the central \hs\ is  {$\H:=V_o=  L^2 (X, \mu)$} and the spaces $V_p, V_{\ov p}$ are dual of each other with respect to the $L^2$ inner product. The partial inner product, which extends that of  $L^2 (X, \mu)$, is denoted again by $\ip{\cdot}\cdot$.
As usual we put $V= \sum_{p\in J} V_p$ and $V^\#= \bigcap_{p\in J}V_p.$ Thus $\psi : X \to V$ means that $\psi : X \to V_p$ for some $ p\in J$.

\beex A typical example is the lattice generated by the Lebesgue spaces $L^p(\RN, \ud x), \,1\leq p \leq \infty $,
with $\frac1p +\frac{1}{\ov{p}} = 1$ \cite{pip-book}. We shall discuss it in detail in Section \ref{sec-Lp}.
\enex

We will envisage two approaches, depending whether  the functions $\psi_x$ themselves belong to $V$ or rather the scalar functions $C_\psi f$.

\subsection{Vector-valued measurable functions $\psi_x$}

This approach is the exact generalization of the one used in the RHS case.
Let $x\in X \mapsto \psi_x, \,x\in X \mapsto \phi_x$ weakly measurable functions from $X$ into $V$, 
 where the latter is equipped with the weak topology $\sigma(V,V\co)$.  More precisely, {assume that } $\psi : X \to V_p$ for some $p\in J$ and $\phi: X \to V_q$ for some $q\in J$, both weakly measurable. In that case, the analysis of Section \ref{subsec-hilb} may be repeated  \emph{verbatim}, simply replacing $\D$ by $V^\#$, thus defining 
 reproducing pairs.
The problem with this approach is that, in fact,  it does not exploit the \pip\ structure, only the RHS $V^\# \subset \H \subset V$\,!
Clearly,  this approach yields no  benefit, so we turn to a different strategy.}

\subsection{Scalar measurable functions $C_\psi f$}

Let $\psi, \phi$ be weakly measurable functions from $X$ into  {$\H$}. 
In view of \eqref{eq-defS}, \eqref{defVp} and  the definition of $V$, we assume that the following condition holds:
\begin{itemize}
\item[{\sf (p)} ]
 { $ \exists \, p\in J $ such that $ C_\psi f =\ip{f}{\psi_{\cdot}}\in V_p$ and $ C_\phi g = \ip{g}{\phi_{\cdot}}\in V_{\ov p}, \forall\, f,g \in \H $}.
\end{itemize}
We  recall that $V_{\ov p}$ is the conjugate dual of $V_{ p}$. In this case, then
$$
\Omega_{\psi,\phi}(f,g): = \int_X \ip{f}{\psi_x}\ip{\phi_x}{g}\ud\mu(x),\;  f,g\in\H,
$$
defines a sesquilinear form on  {$\H\times\H$} and one has
\be\label{eq-formpp}
|\Omega_{\psi,\phi}(f,g) | \leq \norm{p}{C_\psi f} \, \norm{\ov p}{C_\phi g}, \;\forall\, f,g\in\H.
\en
If $\Omega_{\psi,\phi}$ is bounded as a form on  {$\H\times \H$}  (this is not automatic, see Corollary \ref{cor55}),
there exists a bounded operator $S_{\psi,\phi}$ in  {$\H$} such that
\begin{equation}\label{eqn_*}
\int_X \ip{f}{\psi_x}\ip{\phi_x}{g}\ud\mu(x)= \ip{S_{\psi,\phi}f}{g}, \;\forall \,f,g \in  {\H}.
\end{equation}
Then  $(\psi,\phi)$ is a \emph{reproducing pair} if $S_{\psi,\phi}\in GL( {\H})$.
\medskip

Let us suppose that the spaces $V_p$ have the following property
\begin{itemize}
\item[\sf{(k)}] If $\xi_n \to \xi$ in $V_p$, then, for every compact subset $K \subset X$, there exists a subsequence $\{\xi^K_n\}$ of $\{\xi_n\}$ which converges to $\xi$ almost everywhere in $K$.
    \end{itemize}
We note that condition {\sf{(k)}} is satisfied by $L^p$-spaces \cite{rudin}.

As seen before, $C_\psi: \H\to V$, in general. This means, given $f\in \H$,  there exists $p \in J$  such that $C_\psi f =  \ip{f}{\psi_{\cdot}}\in V_p$. We define
$$
D_r(C_\psi)= \{f \in \H:\, C_\psi f \in V_r\}, \; r\in J.
$$
 {In particular, $D_r(C_\psi)= {\H}$ means $C_\psi (\H) \subset V_r$.}

\begin{prop} Assume that {\sf{(k)}} holds. Then $C_\psi: D_r(C_\psi) \to V_r$ is a closed linear map.
\end{prop}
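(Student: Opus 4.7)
Linearity of $C_\psi$ on $D_r(C_\psi)$ is immediate from the linearity of $f\mapsto \ip{f}{\psi_x}$ and the fact that $V_r$ is a vector space, so the content of the statement is closedness.

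The plan is to work with the graph. Take a sequence $\{f_n\}\subset D_r(C_\psi)$ with $f_n \to f$ in $\H$ and $C_\psi f_n \to \xi$ in $V_r$; I must show that $f\in D_r(C_\psi)$ and $C_\psi f = \xi$ in $V_r$, i.e.\ that $\xi(x)=\ip{f}{\psi_x}$ for a.e.\ $x$. I would combine two types of convergence of the sequence $C_\psi f_n$: (i) a \emph{pointwise everywhere} convergence that comes from $f_n\to f$ in $\H$, namely
$$
(C_\psi f_n)(x)=\ip{f_n}{\psi_x}\longrightarrow \ip{f}{\psi_x},\qquad \forall\, x\in X,
$$
by continuity of the inner product, and (ii) the convergence in $V_r$, which via hypothesis \textsf{(k)} yields, for each compact subset $K\subset X$, a subsequence $\{C_\psi f_{n_k}^K\}$ converging to $\xi$ a.e.\ on $K$.

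Fix a compact $K\subset X$. Applying \textsf{(k)} to the $V_r$-convergent sequence $\{C_\psi f_n\}$, I extract a subsequence converging to $\xi$ almost everywhere on $K$; along the same subsequence, pointwise convergence gives $(C_\psi f_{n_k}^K)(x)\to \ip{f}{\psi_x}$ for every $x\in K$. By uniqueness of limits of scalar sequences, $\xi(x)=\ip{f}{\psi_x}$ for a.e.\ $x\in K$. Since $(X,\mu)$ is $\sigma$-compact, $X$ can be exhausted by a countable increasing union of compacta $K_1\subset K_2\subset \cdots$, and taking the countable union of the exceptional null sets yields $\xi(x)=\ip{f}{\psi_x}=(C_\psi f)(x)$ for $\mu$-a.e.\ $x\in X$. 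Hence $C_\psi f=\xi$ as elements of the space of measurable functions, so $C_\psi f\in V_r$ (in particular $f\in D_r(C_\psi)$) and the graph is closed.

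The only delicate point I anticipate is the interplay between the two notions of convergence: the subsequence in \textsf{(k)} depends on $K$, and one must not confuse it with a single subsequence working on all of $X$. The resolution above is that one does not need a global subsequence---a.e.\ equality is a pointwise statement, so a separate extraction on each $K_j$ is enough, and countable additivity of the null sets together with $\sigma$-compactness of $X$ assembles them into a single a.e.\ identity on $X$.
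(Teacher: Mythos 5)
Your proof is correct and follows essentially the same route as the paper's: combine the everywhere-pointwise convergence $\ip{f_n}{\psi_x}\to\ip{f}{\psi_x}$ coming from $f_n\to f$ in $\H$ with the a.e.\ convergence on compacta supplied by \textsf{(k)}, and identify the two limits. The only difference is that you spell out the $\sigma$-compact exhaustion needed to pass from a.e.\ equality on each compact $K$ to a.e.\ equality on $X$, a step the paper leaves implicit.
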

\begin{proof}
 Let $f_n \to f$ in  {$\H$} and $\{C_\psi f_n\}$ be Cauchy in $V_r$. Since $V_r$ is complete, there exists $\xi \in V_r$ such that $\|C_\psi f_n-\xi\|_r\to 0$.
 By {\sf{(k)}}, for every compact subset $K \subset X$, there exists a subsequence $\{f_n^K\}$ of $\{f_n\}$ such that $(C_\psi f_n^K)(x) \to \xi(x)$ a.e. in $K$.
 On the other hand, since $f_n \to f$ in  {$\H$}, we get
 $$
 \ip{f_n}{\psi_x} \to  \ip{f}{\psi_x}, \quad \forall x \in X ,
 $$
 and the same holds true, of course, for $\{f_n^K\}$. From this we conclude that $\xi(x)=
\ip{f}{\psi_x}$ almost everywhere. Thus, $f \in D_r(C_\psi)$ and $\xi= C_\psi f$.
\end{proof}
\medskip

By a simple application of the closed graph theorem we obtain
\begin{cor}\label{cor54}
 {Assume that {\sf{(k)}} holds}. If for some $r\in J $, $C_\psi (\H) \subset V_r $, then $C_\psi:  {\H}\to V_r$ is continuous.
\end{cor}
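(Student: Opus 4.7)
The plan is essentially the one flagged by the authors: invoke the closed graph theorem, since the previous proposition has already done the hard work of establishing closedness.

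First I would note that both spaces involved are Banach spaces: $\H$ is a Hilbert space, and $V_r$ is a Banach space by the very definition of a LBS (or LHS) recalled in the appendix. So the closed graph theorem is applicable as soon as the map is known to be closed and everywhere defined.

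Next, I would observe that the hypothesis $C_\psi(\H) \subset V_r$ is exactly the statement $D_r(C_\psi) = \H$. Combined with the preceding proposition, which under assumption (k) guarantees that $C_\psi : D_r(C_\psi) \to V_r$ is a closed linear operator, this means that $C_\psi : \H \to V_r$ is a closed linear map between Banach spaces with full domain. The closed graph theorem then delivers continuity, concluding the proof.

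There is no serious obstacle here: the substance of the argument is entirely in the preceding proposition, where condition (k) is used to pass from norm convergence in $V_r$ to pointwise convergence along a subsequence, and thus to identify the limit with $\ip{f}{\psi_\cdot}$. Once that step is in hand, the corollary is a one-line consequence. The only thing one has to be slightly careful about is that the closed graph theorem requires a Banach (or more generally F-space) target, which is ensured by the LBS/LHS framework where each $V_r$ is a Banach space, so no additional regularity needs to be imposed.
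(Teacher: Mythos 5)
Your proposal is correct and matches the paper's own argument exactly: the paper states the corollary as ``a simple application of the closed graph theorem'' to the preceding proposition, which is precisely the route you take. Your added remarks on why the closed graph theorem applies (full domain from $C_\psi(\H)\subset V_r$, Banach target from the LBS/LHS framework) are accurate and only make explicit what the paper leaves implicit.
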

Combining Corollary \ref{cor54} with \eqref{eq-formpp}, we get
\begin{cor}\label{cor55}
 {Assume that {\sf{(k)}} holds}. If $C_\psi (\H) \subset V_p $ and $C_\psi (\H) \subset V_{\ov p}$, the form $\Omega$ is bounded on  {$\H\times \H$}, that is,
 {$|\Omega_{\psi,\phi}(f,g) | \leq c\norm{}{ f} \, \norm{}{ g}$}.
\end{cor}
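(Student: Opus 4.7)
The plan is to observe that Corollary \ref{cor54} is essentially all we need, once combined with the pointwise bound \eqref{eq-formpp}. So the proof is almost entirely assembly.

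First I would apply Corollary \ref{cor54} to $\psi$: since condition {\sf(k)} holds and, by hypothesis, $C_\psi(\H) \subset V_p$, the closed linear map $C_\psi : \H \to V_p$ is automatically continuous, yielding a constant $c_1 > 0$ with $\|C_\psi f\|_p \leq c_1 \|f\|$ for all $f \in \H$. The same argument, applied to $\phi$ with $V_{\ov p}$ in place of $V_p$ (the hypothesis in the statement, read as intended under condition {\sf(p)}, gives $C_\phi(\H) \subset V_{\ov p}$), produces $c_2 > 0$ with $\|C_\phi g\|_{\ov p} \leq c_2 \|g\|$ for all $g \in \H$.

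Next I would plug these two estimates into inequality \eqref{eq-formpp}. That inequality was derived straight from the duality \eqref{defVp} between $V_p$ and $V_{\ov p}$ inside the LBS/LHS, and gives
\[
|\Omega_{\psi,\phi}(f,g)| \leq \|C_\psi f\|_p \, \|C_\phi g\|_{\ov p}.
\]
Combining with the two continuity bounds above yields $|\Omega_{\psi,\phi}(f,g)| \leq c\,\|f\|\,\|g\|$ with $c = c_1 c_2$, which is exactly the desired boundedness of $\Omega$ on $\H \times \H$.

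There is no real obstacle here: the whole content of the corollary is that, under {\sf(k)}, the pointwise partial-inner-product estimate \eqref{eq-formpp} can be promoted to a uniform bound because each analysis operator, landing in a single $V_r$, is automatically bounded by the closed graph theorem (that was the work already done in the proposition preceding Corollary \ref{cor54}). The only point worth flagging is that the statement as written has $C_\psi(\H) \subset V_{\ov p}$ where the application requires $C_\phi(\H) \subset V_{\ov p}$; this is consistent with condition {\sf(p)} and does not affect the argument.
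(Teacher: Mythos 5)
Your proof is correct and follows exactly the paper's intended argument: the paper derives Corollary~\ref{cor55} in one line by combining Corollary~\ref{cor54} (continuity of each analysis operator into its $V_r$, via the closed graph theorem) with the duality estimate \eqref{eq-formpp}. You also rightly flag that the second hypothesis is a typo and should read $C_\phi(\H)\subset V_{\ov p}$, consistent with condition {\sf (p)}.
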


 {Hence, if condition {\sf{(k)}} holds, $C_\psi (\H) \subset V_r $ implies that $C_\psi:  {\H}\to V_r$ is continuous.
If we don't know whether the condition holds, we will have to assume explicitly that $C_\psi:  {\H}\to V_r$ is continuous.}

If $C_\psi:  {\H}\to V_r$ continuously, then $C_\psi^\ast:V_{\ov r} \to  {\H}$ exists and it is continuous. By definition, if $\xi \in V_{\ov r} $,
\be\label{eq:cont}
\ip{C_\psi f}{\xi}= \int_X  \ip{f}{\psi_x} \overline{\xi(x)}\ud\mu(x),\;\forall\,f\in\H.
\en
{The relation \eqref{eq:cont} then  implies that}
$$
 \int_X  \ip{f}{\psi_x} \overline{\xi(x)}\ud\mu(x)=   \ip{f}{\int_X \psi_x {\xi(x)}\ud\mu(x)},\;\forall\,f\in\H.
$$
Thus,
$$
 C^*_\psi\xi = \int_X \psi_x {\xi(x)}\ud\mu(x).
 $$

Of course, what we have said about $C_\psi$ holds in the very same way for $C_\phi$. Assume now that for some
 $p \in J, C_\psi:  {\H}\to V_p$ and $C_\phi:  {\H}\to V_{\ov p}$ continuously.
 Then, $C_\phi^*:V_p \to  {\H}$ so that
 $C_\phi^*C_\psi$ is a well-defined bounded operator in  {$\H$}. As before, we have
$$
 C_\phi^* \eta = \int_X \eta(x) \phi_x \ud\mu(x), \; \forall \, \eta\in V_p.
 $$
Hence,
$$
 C_\phi^*C_\psi f = \int_X \ip{f}{\psi_x}\phi_x\ud\mu(x) = S_{\psi, \phi} f, \quad \forall\, f\in  {\H},
 $$
the last equality following also {from \eqref{eqn_*} and Corollary \ref{cor55}.} Of course,  this does not yet imply that  $S_{\psi, \phi}\in GL( {\H})$, thus we don't know whether $(\psi, \phi)$ is a reproducing pair.

 {Let us now return to the pre-Hilbert space $\V_\phi(X, \mu) $.  First,  the defining relation (3.3) of  \cite{ast-reprodpairs} must be written as
 $$
\xi \in \V_\phi(X, \mu)  \; \Leftrightarrow \;  \left|  \int_X  \xi(x)  \ov{(C_\phi g )(x)}  \ud\mu(x) \right| \leq c \norm{}{g}, \; \forall\, g \in \H.
 $$
Since $C_\phi :\H \to V_{\ov p}$, the integral  {is well defined} for all $\xi \in V_p$. This means, the inner product  on the r.h.s.  
is in fact  the partial inner product of $V$, which coincides with the $L^2$ inner product whenever the latter makes sense.
We may rewrite the r.h.s.  as
$$
|\ip{\xi}{C_\phi g}| \leq c \norm{}{g}, \forall\, g \in  {\H},\;\xi \in V_p.  
$$
where $\ip{\cdot}{\cdot}$ denotes the partial inner product. Next, by \eqref{defVp}, one has, for $\xi \in V_p, g\in\H$,
$$
 |\ip{\xi}{C_\phi g}| \leq \norm{p}{\xi} \, \norm{\ov p}{C_\phi g} \leq c \norm{p}{\xi} \norm{ }{g},
 $$
 where the last inequality follows from Corollary \ref{cor54} {or the assumption of continuity of $C_\phi$}. Hence  indeed $\xi \in \V_\phi(X, \mu)$, so that $V_p \subset \V_\phi(X, \mu)$.}

As for the adjoint operator, we have $C_\phi^* : V_p \to \H$. Then we may write, for $\xi\in V_p, g\in\H$,
$\ip{\xi}{C_\phi g}  = \ip{ T_\phi\xi}{g}$, thus  {$C_\phi^*$ is the restriction from $\V_\phi(X, \mu)$ to  $V_p$ of the operator
{$T_\phi: \V_\phi \to \H$ introduced in Section \ref{sec-prel}, which reads now as
\be\label{eq-Tphi-p}
\ip{T_\phi   \xi}{g}    =\int_X  \xi(x)  \ip{\phi_x}{g} \ud\mu(x) , \; { \forall} \, \xi \in V_p, g\in\H.
\en
Thus $C_\phi^* \subset T_\phi$.}

Next, the construction proceeds as in Section \ref{sec-RHS}.  The space $ V_\phi(X, \mu)= \V_\phi(X, \mu)/{\Ker}\,T_\phi$, with the norm
 $\norm{\phi}{\cl{\xi}{\phi}}=\norm{}{T_\phi \xi}$, is a pre-Hilbert space. Then Theorem 3.14 and the other results from Section 3 of \cite{ast-reprodpairs} remain true. In particular, we have:
  \betheo\label{theo-dual2}
If $(\psi,\phi)$ is a reproducing pair,  the spaces $V_\phi(X, \mu)$ and $V_\psi(X, \mu)$ are both Hilbert spaces, {conjugate dual} of each other with respect to the sesquilinear form \eqref{eq_sesq}, namely, 
$$
\ipp{\xi}{\eta}:= \int_X \xi(x)  \ov{\eta(x) } \ud\mu(x).
$$
\entheo
Note the form \eqref{eq_sesq} coincides with the inner product of $L^2(X, \mu)$ whenever the latter makes sense.
\medskip

{Let $(\psi,\phi)$ is a reproducing pair.}
 {Assume again  that    $C_\phi :  {\H} \to V_{\ov{ p}}$ continuously, which me may write
 $\widehat{C}_{\phi ,\psi}  :   {\H} \to V_{\ov{ p}}/{{\Ker}\,T_\psi}$, where  $\widehat{C}_{\phi ,\psi}  :  \H \to V_\psi(X, \mu) $ is  the operator  defined  by
 $\widehat{C}_{\phi ,\psi} f:= [C_\phi f]_\psi $, already introduced at the end of Section \ref{sec-alt}.   In addition, by \cite[Theorem 3.13]{ast-reprodpairs}, one has $\ran \widehat C_{\psi,\phi}[\norm{\phi}{\cdot}]=V_\phi(X,\mu)[\|\cdot\|_\phi]$
and $\ran \widehat C_{\phi,\psi}[\norm{\psi}{\cdot}]=V_\psi(X,\mu)[\|\cdot\|_\psi]$.

 Putting everything together, we get
\begin{cor}\label{cor56}
Let $(\psi, \phi)$ be a reproducing pair. Then,  {if $C_\psi :  {\H} \to V_{ p}$ and $C_\phi :  {\H} \to V_{\ov{ p}}$ continuously,}
one has
\begin{align}
\widehat{C}_{\phi ,\psi} :  {\H} \to V_{\ov{ p}}/{{\Ker}\,T_\psi} =  V_\psi(X, \mu) \simeq {\ov{V_\phi}}(X, \mu)^* , \label{eq-Cphipsi}
\\
\widehat{C}_{\psi ,\phi} :  {\H} \to V_{ p}/{\Ker}\,T_\phi  = V_\phi(X, \mu) \simeq   {\ov{V_\psi}}(X, \mu)^* . \label{eq-Cpsiphi}
\end{align}
 In these relations, the equality sign   means an isomorphism of vector spaces, whereas $\simeq$ denotes an isomorphism of Hilbert spaces.
 \encor
{ \begin{proof}
 On one hand, we have $\ran \widehat C_{\phi,\psi} =V_\psi(X,\mu) $. On the other hand, under the assumption $C_\phi( {\H})\subset V_{\ov p}$, one has
 $V_{\ov p} \subset \V_\psi(X, \mu)$, hence $V_{\ov p}/{{\Ker}\,T_\psi}=\{\xi +\Ker T_\psi, \; \xi \in  V_{\ov p}\}
  \subset V_\psi(X, \mu)$.   
 Thus we get $V_\psi(X, \mu) = V_{\ov p}/{{\Ker}\,T_\psi}$ as vector spaces.
Similarly   $V_\phi(X, \mu) =  V_{p}/{{\Ker}\,T_\phi}.$
 \medskip
  \end{proof}}

{Notice that, in Condition {\sf (p)},  the index $p$ cannot depend  on $f,g$.}
 We need some uniformity,  in the form $C_\psi( {\H})\subset V_p$ and $C_\phi( {\H})\subset V_{\ov p}$ . This is fully in line with the
 philosophy of \pip s: the building blocks are the (assaying) subspaces $V_p$, not individual vectors.}

 {\section{The case of a Hilbert triplet or a Hilbert scale}}
\label{sec-Hchain}

\subsection{The general construction}
\label{subsec-genconst}

We have derived in the previous section the relations $V_p \subset \V_\phi(X, \mu), V_{\ov{ p}}\subset \V_\psi(X, \mu)$,  and their equivalent ones \eqref{eq-Cphipsi}, \eqref{eq-Cpsiphi}. Then, since $V_\psi(X, \mu) $ and $V_\phi(X, \mu) $ are both Hilbert spaces, it seems natural to take for $V_p, V_{\ov{ p}}$ 
 Hilbert spaces as well, that is, take for $V$ a LHS. The simplest case is then a Hilbert chain, for instance, the scale \eqref{eq:scaleHn} $\{\H_k, k\in\ZN\}$ built on the powers of a self-adjoint operator $A>I$ . This situation is quite interesting, since in that case one may get results about spectral properties of symmetric operators (in the sense of \pip\ operators) \cite{at-pipops}.
 
 Thus, let $(\psi,\phi)$ be a reproducing pair. For simplicity, we assume that $S_{\psi,\phi}= I$, that is, $\psi,\phi$ are dual to each other.
 
 If  $\psi$ and $\phi$ are both frames, there is nothing to say, since then $C_\psi( \H),  C_\phi( \H)\subset L^2(X, \mu)=\H_o$, so that there is no need for  a Hilbert scale.   {Thus we assume that $\psi$ is an upper semi-frame and $\phi$ is a lower semi-frame, dual to each other. It follows that $C_\psi( \H)\subset L^2(X, \mu)$. Hence  Condition {\sf (p)} becomes: There is an index $k\geq 1$ such that  {if $C_\psi :  {\H} \to \H_k$ and $C_\phi :  {\H} \to \H_{\ov k}$ continuously,}
 thus $V_{p}\equiv \H_k$  and  $V_{\ov p}\equiv \H_{\ov k}$. This
 means we are working in the Hilbert triplet
 \be\label{eq-triplet}
V_{p}\equiv \H_k \subset \H_o =  L^2(X, \mu) \subset \H_{\ov k}\equiv  V_{\ov p}\,.
 \en
 Next, according to Corollary \ref{cor56}, we have  $V_\psi(X, \mu) = \H_{\ov k}/{{\Ker}\,T_\psi}$ and $V_\phi(X, \mu) =   \H_{k}/{{\Ker}\,T_\phi}$, as vector spaces.}

 {In addition, since $\phi$ is a lower semi-frame,  \cite[Lemma 2.1]{ant-bal-semiframes1} tells us that $C_\phi$ has closed range in $L^2(X, \mu)$ and is injective. However its domain 
 $$
D(C_{\phi}):= \{f\in\H : \int_{X}  |\ip{f}{\phi_{x}}| ^2 \, \ud \nu(x) <\infty\}
$$
need not be dense, it could be $\{0\}$. Thus $C_\phi$ maps its domain $D(C_{\phi})$ onto a closed subspace of  $L^2(X, \mu)$, possibly trivial, and the whole of $\H$ into the larger space $\H_{\ov k}$.  
}

\subsection{Examples}
\label{subsec-ex}

 {As for concrete examples of such Hilbert scales, we might mention two. First the Sobolev spaces $H^k(\RN), \, k\in \ZN$, in  
$\H_0 = L^2(\RN, dx)$, which is the scale generated by the powers of the self-adjoint operator $A^{1/2}$, where $A  := 1 -\frac{\ud^2}{\ud x^2}$.
The other one corresponds to the quantum harmonic oscillator, with Hamiltonian  $A_\mathrm{osc} := x^2-\frac{\ud^2}{\ud x^2}$. The spectrum of  $A_\mathrm{osc}$ is $\{2n+1, n= 0,1,2,\ldots\}$ and it gets diagonalized on the basis of Hermite functions. 
It follows that $A_\mathrm{osc}^{-1}$, which maps every  $\H_k$ onto  $\H_{k-1}$, is a Hilbert-Schmidt operator. Therefore, the end space of the scale $\D^{\infty}(A_\mathrm{osc}):=\bigcap_{k} \H_k$, which is simply Schwartz' space $\mathcal S$ of $C^\infty$ functions of fast decrease, is a nuclear space.
}

{Actually one may give an explicit example, using a Sobolev-type scale. Let $\H_K$ be a reproducing kernel Hilbert space (RKHS) of (nice) functions on 
ameasure space $(X, \mu)$, with kernel function $k_x, x\in X$, that is, $f(x)=\ip{f}{k_x}_K,\, \forall f\in\H_{K}$. The corresponding reproducing kernel is $K(x,y)=k_y(x)=\ip{k_y}{k_x}_K$. Choose the weight  function $m(x) >1$, the analog of the weight $(1+|x|^2)$ considered in the Sobolev case. Define the Hilbert scale $\H_k, \, k\in \ZN$, determined by the multiplication operator $Af(x) = m(x) f(x), \, \forall x\in X$.
Hence, for each $l \geq 1$,
$$
 \H_l \subset\H_0 \equiv  \H_K \subset \H_{\ov l}\,. 
$$
Then, for some $n \geq 1$, define the measurable functions $\phi_x = k_x m^n(x), \psi_x = k_x m^{-n}(x)$, so that 
 {$C_\psi : \H_K \to   \H_n,  \, C_\phi : \H_K \to  \H_{\ov n}$ continuously, }and they are dual of each other. Thus $(\psi, \phi)$ is a reproducing pair, with $\psi$ an upper semi-frame and $\phi$ a lower semi-frame.
}

{In this case, one can compute the operators $T_\psi, T_\phi$ explicitly. The definition \eqref{eq-Tphi-p} reads as
$$
\ip{T_\phi   \xi}{g}_K    =\int_X  \xi(x)  \ip{\phi_x}{g}_K \ud\mu(x) , \; { \forall} \, \xi \in \H_n, g\in\H_K.
$$
Now $ \ip{\phi_x}{g}_K = \ip{k_x m^n(x)}{g}_K =  \ip{k_x}{g \, m^n(x)}_K = \ov{g(x)}\,m^n(x) \in \H_{\ov n}$.
Thus
$$
\ip{T_\phi   \xi}{g}_K  = \int_X  \xi(x) \, \ov{g(x)}\,m^n(x) \ud\mu,
$$
that is, $(T_\phi   \xi )(x) = \xi(x)\, m^n(x) $ or $T_\phi   \xi = \xi \,m^n$.
However, since the weight $m(x)>1$ is invertible, $\ov{g}\,m^n$ runs over the whole of $\H_{\ov n}$ whenever $g$ runs over $H_K$.
Hence  $\xi\in \Ker\,T_\phi \subset \H_n$ means that $\ip{T_\phi   \xi}{g}_K =0, \, \forall g\in H_K$, which implies $\xi = 0$, since the duality between
$\H_n$ and $\H_{\ov n}$ is separating. The same reasoning yields $\Ker\,T_\psi = \{0\}$. Therefore $V_\phi(X, \mu) = \H_n$ and $V_\psi(X, \mu) = \H_{\ov n}$.
}

   \medskip
   
 {A more general situation may be derived from }the discrete example of Section 6.1.3 of \cite{ast-reprodpairs}. Take a sequence of weights $m:=\{|m_n|\}_{n\in\NN}\in c_0, m_n\neq 0,$ and consider the space $\ell^2_m$ with norm $\norm{\ell^2_m}{\xi}:=\sum_{n\in\NN}|m_n \xi_n |^2$. Then we have the following triplet replacing \eqref{eq-triplet}
  \be\label{eq-triplet2}
 \ell^2_{1/m} \subset \ell^2  \subset \ell^2_m.
 \en 
 {Next, for each $n\in\NN$,  define $\psi_n = m_n \theta_n$, where $\theta$ is a frame or an orthonormal basis in $\ell^2$. Then $\psi$  is an upper semi-frame.
  Moreover, 
   $\phi:=\{(1/\ov{m_n})\theta_n\}_{n\in\NN} $ is a lower semi-frame, dual to $\psi$, thus $(\psi,\phi)$ is a reproducing pair.
 Hence, by \cite[Theorem 3.13]{ast-reprodpairs},  $V_\psi \simeq\ran C_\phi = M_{1/m} (V_\theta(\NN)) =\ell^2_m$ and
$V_\phi \simeq\ran C_\psi = M_{m} (V_\theta(\NN)) =\ell^2_{1/m}$
(here we take for granted that ${\Ker}\,T_\psi = {\Ker}\,T_\phi = \{0\}$).}

 {For making contact with the situation of \eqref{eq-triplet}, consider in $\ell^2 $ the diagonal operator $A:= \mathrm{diag}[n], n\in \NN$, that is $(A\xi)_n = n \,\xi_n, n\in \NN$, which is obviously self-adjoint and larger than 1. Then $ \H_k = D(A^k)$ with norm  $\norm{k}{\xi}=  \norm{}{A^k\xi}\equiv \ell^2_{r^{(k)}}$, where $(r^{(k)})_n = n^k$ (note that $1/{r^{(k)}}\in c_0$).
Hence we have
 \be\label{eq-triplet3}
 \H_k= \ell^2_{r^{(k)}} \subset \H_o = \ell^2 \subset \H_{\ov k}=\ell^2_{1/{r^{(k)}}}\,,
\en
 where $(1/r^{(k)})_n = n^{-k}$. In addition, as in the continuous case discussed above, the end space of the scale,  $\D^{\infty}(A):=\bigcap_{k} \H_k$, is simply Schwartz' space $s$ of fast decreasing sequences, with dual $ \D_{\ov \infty}(A):=\bigcup_{k} \H_{k} = s'$, the space of slowly increasing sequences. Here too, 
  this construction shows  that the space $s$ is nuclear, since every embedding $A^{-1} : \H_{k+1} \to \H_k$ is a Hilbert-Schmidt operator.}
  
However, the construction described above yields a much more general family of examples, since the weight sequences $m$ are not ordered.}

\section{The case of $L^p$ spaces}
\label{sec-Lp}

Following the suggestion made at the  {end} of Section \ref{sec-prel}, we present now several possibilities of taking  $\ran C_\psi$ in the context of the Lebesgue spaces $L^p(\RN,\ud x)$.

As it is well-known, these spaces  don't form a chain, since two of them are never  comparable. We have only
$$
 L^p \cap L^q \subset L^s, \, \mbox {for all }     s  \mbox{ such that } \; p<s<q.
$$
Take the lattice $\J$ generated by
$\I= \{ L^p(\RN,\ud x), 1 \leq p \leq \infty\}$, with lattice operations  \cite[Sec.4.1.2]{pip-book}:
\bei
\item
  $L^p \wedge L^q = L^p \cap L^q$ is a Banach space for the projective   norm
$\| f \|_{p \wedge q} = \| f \|_p + \| f \|_q$
 \item
  $L^p \vee L^q = L^p + L^q$ is a Banach space  for the inductive norm
 \\
\hspace*{2cm}$\| f \|_{p \vee q} = \inf_{f=g+h} \left\{\| g \|_p + \| h \|_q;  g \in L^p, \, h \in L^q\right\}$
\item
 For $1<p,q<\infty$, both spaces $L^p \wedge L^q$ and $L^p \vee L^q$ are reflexive and
$(L^p \wedge L^q)^\times = L^{\ov p} \vee L^{\ov q}$.
 \eni
Moreover, no additional spaces are obtained by iterating the lattice operations to any finite order. Thus we obtain an involutive lattice and a LBS, denoted by $\VJ $.

It is convenient to introduce a unified notation:
$$
L^{(p,q)} = \left\{ \begin{array}{ll} L^p \wedge L^q = L^p \cap L^q, & \mbox{ if } \; p \geq q, \\
 L^p\vee L^q =L^p + L^q, & \mbox{ if } \; p \leq q.
\end{array}
\right.
$$
Following  \cite[Sec.4.1.2]{pip-book}, we represent  the   space $ L^{(p,q)}$ by the point $(1/p,1/q)$ of the  unit square ${\rm J} = [0,1]\times [0,1]$.
In this representation,  the spaces $L^p$ are on the main diagonal, intersections $L^p \cap L^q$ above it and sums $L^p + L^q$ below, the duality is $[L^{(p,q)}]^\times = L^{(\ov{p},\ov{q})}$, that is,  symmetry with respect to $L^2$.
Hence,  $L^{(p,q)}\subset L^{(p',q')}$ if $(1/p,1/q)$ is on the left and/or above $(1/p',1/q')$
The extreme spaces are
 \footnote{The space $L^1 + L^\infty$ has been considered by  Gould  \cite{gould}.}
$$
\VJ\co
= L^{(\infty,1)} = L^{\infty} \cap L^{1},
\quad \mbox{ and } \quad \VJ
= L^{(1,\infty)} = L^{1} + L^{\infty}.
$$
For a full picture, see \cite[Fig.4.1]{pip-book}.

There are three possibilities for using the $L^p$ lattice for controlling reproducing  {pairs}

(1) Exploit the  \emph{full lattice $\J$}, that is,    find $(p,q)$ such that,  $\forall f,g\in\H$,
 $ C_\psi f \,  \# \, C_\phi g $ in the \pip\ $\VJ $, that is,
$ C_\psi f \in  L^{(p,q)}$    and  $C_\phi g \in L^{(\ov{p},\ov{q})}$.

(2) Select in $\VJ$ a self-dual \emph{Banach chain} $\VI$, centered around $L^2$,
symbolically.
\be\label{eq:chain}
\ldots L^{(s)} \subset \ldots \subset L^2  \subset \ldots \subset L^{(\ov s)} \ldots ,
\en
such that $ C_\psi f \in  L^{(s)}$    and  $C_\phi g \in L^{(\ov{s})}$ (or vice-versa).
 Here are three examples of such Banach chains.

\begin{figure}[t]
\centering

\setlength{\unitlength}{1cm}

\begin{picture}(7,11)

\put(-14,1){
\begin{picture}(7,0)

\put(12.5,0){\vector(0,1){9}}
\put(12.2,9.3){\shortstack{$ 1/q$}}

\put(12.5,0){\vector(1,0){9}}
\put(22.2,0){\makebox(0,0){$  1/p$}}

\put(12.5,0){\line(1,1){8}}
\put(20.5,0){\line(0,1){8}}
\put(12.5,8){\line(1,0){8}}

 \put(14.5,2){\color{red}\dashbox{0.1}(4,4)}
  \put(12.5,0){\color{red}\dashline [+40]{0.12}(0,6)(2,6)}
  \put(12.5,0){\color{red}\dashline [+40]{0.12}(0,2)(2,2)}

\put(12.5,8){\line(1,-1){8}}
\put(14.5,5){\color{blue}\line(2,-1){4}}
\put(15.5,2){\color{blue}\line(1,2){2}}

\put(12.5,4){\dashbox{0.1}(8,0)}
\put(16.5,0){\dashbox{0.1}(0,8)}

\put(12,-0.5){\makebox(0,0){$  L^{\infty} $}}
\put(21,8.5){\makebox(0,0){$  L^{1} $}}

\put(20.6,-0.5){\makebox(0,0){$  L^{1} + L^{\infty}$}}
\put(16.6,-0.5){\makebox(0,0){$ L^{2} + L^{\infty}$}}
\put(11.5,8.2){\makebox(0,0){$ L^{\infty} \cap L^{1}$}}
\put(11.5,6.2){\makebox(0,0){$ L^{\infty} \cap L^{(t)}$}}
\put(11.5,2.2){\makebox(0,0){$ L^{\infty} \cap L^{(\ov t)}$}}
\put(11.5,4.2){\makebox(0,0){$L^{\infty} \cap L^{2}$}}
\put(21.5,4.2){\makebox(0,0){$L^{1} + L^{2}$}}
\put(16.6,8.5){\makebox(0,0){$  L^{2} \cap L^{1}$}}

\put(16.7,3.45){\makebox(0,0){$ L^2$}}
\put(18.5,6.4){\makebox(0,0){\color{red}$  L^{q} $}}
\put(14.1,2.3){\makebox(0,0){\color{red}$  L^{\ov q} $}}
\put(15.2,6.4){\makebox(0,0){$\color{red} L^{\ov q} \cap L^q $}}
\put(18.9,1.6){\makebox(0,0){$\color{red}  L^{q}+ L^{\ov q} = (L^{\ov q} \cap L^q)^\times $}}
\put(14,5.1){\makebox(0,0){$  L^{(s)} $}}
\put(19.1,3.1){\makebox(0,0){$  L^{(\ov{s})} $}}

\put(17.5,6.4){\makebox(0,0){$  L^{(t)} $}}
\put(15.5,1.6){\makebox(0,0){$  L^{(\ov{t})} $}}

\put(12.5,2){\makebox(0,0){$\scriptstyle\bullet$}}
\put(12.5,6){\makebox(0,0){$\scriptstyle\bullet$}}

\put(12.5,0){\makebox(0,0){$\scriptstyle\bullet$}}
\put(12.5,4){\makebox(0,0){$\scriptstyle\bullet$}}
\put(12.5,8){\makebox(0,0){$\scriptstyle\bullet$}}

\put(14.5,2){\makebox(0,0){$\scriptstyle\bullet$}}
 \put(14.5,6){\makebox(0,0){$\scriptstyle\bullet$}}
 \put(14.5,5){\makebox(0,0){$\scriptstyle\bullet$}}

\put(16.5,0){\makebox(0,0){$\scriptstyle\bullet$}}
\put(16.5,4){\makebox(0,0){$\scriptstyle\bullet$}}
\put(16.5,8){\makebox(0,0){$\scriptstyle\bullet$}}

\put(15.5,2){\makebox(0,0){$\scriptstyle\bullet$}}
\put(17.5,6){\makebox(0,0){$\scriptstyle\bullet$}}

\put(18.5,2){\makebox(0,0){$\scriptstyle\bullet$}}
\put(18.5,3){\makebox(0,0){$\scriptstyle\bullet$}}
\put(18.5,6){\makebox(0,0){$\scriptstyle\bullet$}}

\put(20.5,0){\makebox(0,0){$ \scriptstyle\bullet$}}
\put(20.5,4){\makebox(0,0){$\scriptstyle\bullet$}}
\put(20.5,8){\makebox(0,0){$\scriptstyle\bullet$}}

\end{picture}}

\end{picture}
\caption{(i) The pair  $L^{(s)},L^{(\ov s)}$ for $s$ in the second quadrant;
(ii) The pair  $L^{(t)},L^{(\ov t)}$ for $t$ in the first quadrant.}
\label{fig1}
\end{figure}
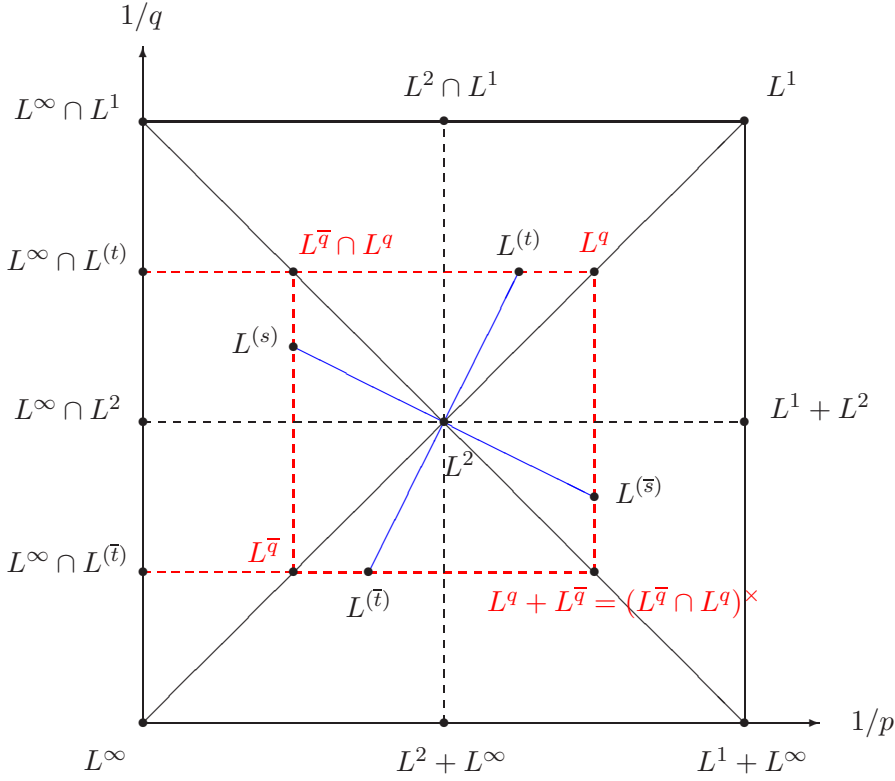

 \begin{itemize}
  \item    The \emph{diagonal} chain : $q= \ov{p}$
  $$
\hspace*{-8mm}  L^{\infty} \cap L^{1}\subset  \ldots \subset   L^{\ov q} \cap L^q  \subset  \ldots \subset
   L^2  \subset  \ldots \subset L^{q}+ L^{\ov q}
  = (L^{\ov q} \cap L^q)^\times  \subset  \ldots \subset L^{1} + L^{\infty}.
  $$
    \item    The horizontal  chain    $q=2$ :
   $$
\hspace*{-8mm}  L^{\infty} \cap L^{2}\subset  \ldots   \subset   L^2  \subset   \ldots \subset L^{1} + L^{2}.
  $$
 \item    The vertical chain $p=2$ :
  $$
   L^{2} \cap L^{1}\subset  \ldots     \subset L^2  \subset   \ldots \subset  L^{2} + L^{\infty}.
  $$
\eni
All three chains are presented in Figure \ref{fig1}.
In this case, the full chain belongs to the second and fourth quadrants (top left and bottom right). A typical point is then 
 $s=(p,q)$ with, $2\leq p \leq \infty, 1\leq q \leq 2$, so that one has
  the situation depicted in \eqref{eq:chain}, that is, the spaces $L^{(s)}, L^{(\ov s)}$ to which
$C_\psi f$, resp. $C_\phi g$,  belong, are necessarily comparable to each other and to $L^2$. In particular, one of them is necessarily contained in $L^2$.
Note the extreme  spaces of that type are $L^2, L^{\infty} \cap L^{2}, L^{\infty} \cap L^{1}$ and $  L^{2} \cap L^{1}$ (see Figure \ref{fig1}).
 
(3) Choose a dual pair in the  first and third quadrant (top right, bottom left). A typical point is then $t=(p',q')$, with $1< p', q' < 2$, so that the spaces $L^{(t)}, L^{(\ov t)}$ are never comparable to each other, nor to $L^2$.   

Let us now add the uniform boundedness condition mentioned at the end of Section \ref{sec-prel}, $\sup_{x\in X}\norm{\H}{\psi_x } \leq c$  and $\sup_{x\in X}\norm{\H}{\phi_x } \leq c'$ for some $c, c'>0$.  
Then $C_\psi f(x) = \ip{f}{\psi_x} \in L^\infty(X,\ud \mu)$ and $C_\phi f(x) = \ip{f}{\phi_x} \in L^\infty(X,\ud \mu)$.
Therefore, the third case reduces to the second one, since we have now (in the situation of Figure \ref{fig1}).
$$
\hspace*{-8mm}  L^{\infty} \cap L^{(t)}\subset   L^{\infty} \cap L^2  \subset    L^{\infty} \cap L^{(\ov t)}.
$$

 {Following the pattern of Hilbert scales, we choose a (Gel'fand) triplet of Banach spaces.
One could have, for instance, a triplet of reflexive Banach spaces such as
\be\label{eq:triplet(s)}
 L^{(s)} \subset \ldots \subset L^2  \subset \ldots \subset L^{(\ov s)},
 \en
corresponding to a point $s$ inside of the second quadrant, as shown in Figure \ref{fig1}.  In this case, according to \eqref{eq-Cphipsi} and \eqref{eq-Cpsiphi}, } $V_\psi = L^{(\ov s)}/\Ker T_\psi $ and $ V_\phi = L^{(s)}/\Ker T_\phi$. 

 {On the contrary, if we choose a point $t$ in the second quadrant, case (3) above, it seems that no triplet arises. However, if $(\psi, \phi)$ is a nontrivial reproducing pair, with  $S_{\phi, \psi}=I$, that is, $\psi, \phi$ are dual to each other, one of them, say $\psi$, is an upper semi-frame and then necessarily $\phi$  is a lower semi-frame 
 \cite[Prop.2.6]{ant-bal-semiframes1}. Therefore
$C_\psi( \H)\subset L^2(X, \mu)$, that is, case (3) cannot be realized. }
\indent  {Inserting the boundedness condition, we get a triplet where the extreme spaces are no longer reflexive, such as  
$$
 L^{\infty} \cap L^{(t)}\subset   L^{\infty} \cap L^2  \subset    L^{\infty} \cap L^{(\ov t)},
 $$
and then $V_\psi = (L^{\infty} \cap L^{(t)})/\Ker T_\psi $ and $ V_\phi = (L^{\infty} \cap L^{(\ov t)})/\Ker T_\phi$.
}

 In conclusion, the only acceptable solution is the triplet \eqref{eq:triplet(s)}, with $s$ strictly inside of the second quadrant, that is, $s=(p,q)$ with, $2\leq p <\infty, 1< q \leq 2$.

A word of explanation is in order here, concerning the relations $V_\psi = L^{(\ov s)}/\Ker T_\psi $ and $ V_\phi = L^{(s)}/\Ker T_\phi$. On the l.h.s., $L^{(s)}$ and $ L^{(\ov s)}$ are reflexive Banach spaces, with their usual norm, and so are the quotients by $T_\psi $, resp. $T_\phi $.
On the other hand, $V_\psi(X,\mu)[\|\cdot\|_\psi]$ and $V_\phi(X,\mu)[\|\cdot\|_\phi]$ are Hilbert spaces.
However, there is no contradiction, since the equality sign $=$ denotes an isomorphism of vector spaces only, without reference to any topology. 
Moreover, the two norms, Banach and Hilbert, \emph{cannot} be comparable, lest they are equivalent \cite[Coroll. 1.6.8]{megg}, which is impossible in the case of $L^p, p\neq 2$. The same is true for any LBS where the spaces $V_p$ are not Hilbert spaces.

Although we don't have an explicit example of a reproducing pair, we  indicate a possible construction towards one. Let $\theta^{(1)}: \RN \to L^2$ be a measurable function such that $\ip{h}{\theta^{(1)}_x}\in L^q, \, \forall\, h\in L^2, 1<q<2$ and let $\theta^{(2)}: \RN \to L^2$ be a measurable function such that $\ip{h}{\theta^{(2)}_x}\in L^{\ov q}, \, \forall\, h\in L^2$. Define $\psi_x := \min(\theta^{(1)}_x, \theta^{(2)}_x) \equiv \theta^{(1)}_x\wedge \theta^{(2)}_x$ and 
$\phi_x := \max(\theta^{(1)}_x, \theta^{(2)}_x) \equiv \theta^{(1)}_x\vee \theta^{(2)}_x$ . Then we have
\begin{align*}
(C_\psi h)(x) = \ip{h}{\psi_x}\in L^q \cap  L^{\ov q}, \, \forall\, h\in L^2
\\
(C_\phi h)(x) = \ip{h}{\phi_x}\in L^q +  L^{\ov q}, \, \forall\, h\in L^2
\end{align*}
and we have indeed $L^q \cap  L^{\ov q} \subset L^2 \subset L^q +  L^{\ov q}$.
It remains to guarantee that $\psi$ and $\phi$ are dual to each other, that is,
$$
\int_X \ip{f}{\psi_x}\ip{\phi_x}{g}\ud\mu(x) =  \int_X C_\psi f (x) \; \ov{C_\phi g(x)}  \ud\mu(x) = \ip{f}{g}, \, \forall \, f,g\in L^2.
$$

\section{Outcome}

 We have seen in \cite{ast-reprodpairs} that the notion of reproducing pair is quite rich. It generates a whole mathematical structure, which ultimately leads
 to a pair of Hilbert spaces, conjugate dual to each other with respect to the $ L^2(X,\mu)$ inner product.  This suggests that  one should make more precise the best assumptions   on the measurable functions or, more precisely, on the nature of the range of the
analysis operators  $C_\psi, C_\phi$. This in turn suggests to analyze the whole structure in the language of \pip s, which is the topic of the present paper.
 {In particular, a natural choice is a  scale, or simply a triplet, of Hilbert spaces, the two extreme spaces being conjugate duals of each other with respect to the $ L^2(X,\mu)$ inner product.
Another possibility consists of exploiting
the lattice of all $L^p(\RN, \ud x)$ spaces, or a subset thereof, in particular a (Gel'fand) triplet of Banach spaces. Some examples have been described above, but clearly more work along these lines is in order.}

\appendix
\section{\hspace*{-4.8mm}ppendix. Lattices of Banach or Hilbert spaces and operators on them}

\def\theequation{\Alph{section}.\arabic{equation}}

 \subsection{Lattices of Banach or Hilbert spaces}
\label{subsec-pip}

For the convenience of the reader, we summarize in this Appendix the basic facts concerning \pip s and operators on them. However, we will restrict the discussion to the simpler case of a lattice  of Banach (LBS) or Hilbert spaces (LHS). Further information may be found in our monograph \cite{pip-book} or our review paper \cite{at-AMP}.

 Let thus $\J = \{ V_p,\, p \in I \}$ be a family of \hs s or reflexive Banach spaces, partially ordered by inclusion. Then $\I$ generates an involutive  lattice $\J$, indexed by $J$, through the operations $(p,q,r \in I)$:
\medskip

\begin{tabular}{lccl}
. involution: &$V_r$                     & $\!\!\!\leftrightarrow\!\!\!$& $V_{\ov{r}}= V_r ^\times $, the conjugate dual of $V_r$\\
. infimum:   & $V_{p \wedge q}$  &$\!\!\! :=\!\!\!$                & $V_p \wedge V_q = V_p \cap V_q$     \\
. supremum: & $V_{p \vee q}$     &$\!\!\! :=\!\!\!$               &$ V_p \vee V_q = V_p + V_q $.
\end{tabular}
\\[2mm]
It turns out that both $V_{p\wedge q}$ and $V_{p\vee q}$ are \hs s, resp. reflexive Banach spaces, under appropriate norms (the so-called projective, resp. inductive norms).
   Assume that the following conditions are satisfied:
  \bei
\item [(i)]  $\I$ contains a  unique self-dual, Hilbert  subspace $V_{o} =V_{\overline{o}}$.

\item [(ii)]  for every $V_r\in\I$, the norm $\|\cdot\|_{\ov{r}}$ on $V_{\ov{r}}=V_{r}^\times$ is the conjugate of the norm $\|\cdot\|_{r}$ on $V_{r}$.
\eni  
In addition to the family $\J =\{V_{r}, \,r\in J\}$, it is convenient to consider the two spaces $V\co$ and $V $ defined as
\be
V = \sum_{q\in I}V_{q}, \quad V\co = \bigcap_{q\in I}V_{q}.
\label{eq:extreme}
\en
These two spaces  themselves usually do \emph{not} belong  to $\I$.

We say that two vectors $f,g\in V$ are \emph{compatible} if  there exists $r \in J  \mbox{ such that } f \in V_{r}, g \in V_{\overline{r}}$\,.
Then a \emph{partial inner product}   on   $V$ is a
   Hermitian form  $\ip{\cdot}{\cdot}$ defined exactly on compatible pairs of vectors.
 In particular, the partial inner product $\ip{\cdot}{\cdot}$ coincides with the inner product of $V_o$ on the latter.
A \emph{partial inner product space}  (\pip)  is a  vector space $V$ equipped with a partial inner product.
Clearly LBSs and LHSs are particular cases of \pip s.

From now on, we will assume that our \pip\  $(V, \ip{\cdot}{\cdot})$ is \emph{nondegenerate},
that is,    $\ip{f}{g} = 0   $ for all $ f \in  V^{\#} $ implies $ g = 0$.  As a consequence,  $(V\co, V)$ and
 every couple $(V_r , V_{\ov r} ), \,  r\in J, $  are a  dual pair in the sense of topological vector spaces \cite{kothe}.
In particular, the original norm topology on $V_r$ coincides with   its Mackey topology $\tau(V_{r},V_{\ov{r}})$, so that indeed its conjugate dual is $(V_r)^\times = V_{\ov {r}}, \; \forall\, r\in J $.
Then,   $r<s$ implies $V_r \subset V_s$, and the embedding operator $E_{sr}: V_r \to V_s$  is continuous and has dense range. In particular, $V\co$ is dense in every $V_{r}$.
 In the sequel, we also assume the partial inner product to be positive definite, $\ip{f}{f}>0$ whenever $f\neq0$.

 A standard, albeit trivial,  example is that of a Rigged Hilbert space (RHS) $\Phi \subset \H \subset \Phi\co$
(it is trivial because the lattice $\I$ contains only three elements).

Familiar concrete examples of \pip s are sequence spaces, with $V = \omega$    the space    of \emph{all} complex sequences $x = (x_n)$, and
spaces of locally integrable functions with $V =L^1_{\rm loc}(\RN, \ud x)$, the space of Lebesgue measurable functions, integrable over compact subsets.

Among LBSs, the simplest example is that of a chain of reflexive Banach spaces. The prototype is the chain 
$ \I = \{L^p := L ^p ( [0,1 ];dx),\; 1 < p < \infty\} $ of Lebesgue spaces over the interval  [0, 1].
\be\label{eq:Lp}
L^{\infty}\;\subset \;\ldots\;\subset   \; L^{\ov{q}}\;\subset\; L^{\ov{r}}\;\subset \; \ldots \; 
 {\subset \;L^{2}\;\subset} \; \ldots \subset \; L^{r}
\subset \; L^{q} \;\subset \;\ldots\;\subset \;L^{1} ,
\en
 where $1<q<r<2$ (of course, $L^{\infty}$ and $L^1$ are not reflexive). Here $L^{q}$ and $ L^{\ov{q}}$ are dual to each other $(1/q + 1/\ov{ q} = 1)$,  and similarly $L^{r}, L^{\ov{r}}\; (1/r + 1/\ov{r} = 1)$.  

As for a LHS, the simplest example is the Hilbert scale generated by a self-adjoint operator $A>I$ in a \hsÊ$\H_o$.
Let $\H_{n}$ be $ D(A^n)$, the domain of $A^n$, equipped with the graph norm $\| f \|_n =  \| A^n f \|, \, f\in D(A^n) $, for $ n \in\NN$ or $n\in \RN^+$, and $\H_{\ov n}:= \H_{-n} =\H_{n}^{\times}$ (conjugate dual):
\be\label{eq:scaleHn}
 \D^{\infty}(A):=\bigcap_{n} \H_n \subset  \ldots \subset   \H_2  \subset   \H_1  \subset  \H_0 \ \subset
    \H_{\ov 1}   \subset \H_{ \ov 2} \ldots \subset   \D_{\ov \infty}(A):=\bigcup_{n} \H_{n}.
\en
 Note that here  the index $n$ may be integer or real, the link between the two cases
being established by the spectral theorem for self-adjoint operators. Here
again the inner product of $\H_0$ extends to each pair $\H_n ,\H_{-n}$, but on $\D_{\ov\infty}(A)$ it yields only a 
\emph{partial} inner product. A standard example is the scale of Sobolev spaces $H^s(\RN), \, s\in \ZN$, in  
$\H_0 = L^2(\RN, dx)$.

\subsection{Operators on  LBSs and LHSs}

Let $V_{J}$   be a    LHS or a LBS. Then  an \emph{operator} on $V_J$  is a map
from a subset $\D (A) \subset V$ into $V$, such that
\smallskip

(i) $\D(A) = \bigcup_{q\in {\sf d}(A)} V_q$, where ${\sf d}(A)$ is a nonempty subset of $J$;
\smallskip

(ii)  For every $q \in  {\sf d}(A )$, there exists $p\in J$ such that the restriction of $A$ to $V_{q}$ is a continuous linear map into $V_{p}$ (we denote this restriction by $A_{pq})$;
\smallskip

(iii) $A$ has no proper extension satisfying (i) and (ii).
\medskip

\noi We denote by Op$(V_J,)$  the set of all operators on   $V_J$ .
 The continuous linear operator $A_{pq}: V_q \to V_{p}$ is called a \emph{representative} of $A$.
The properties of $A$ are conveniently described
 by the set ${\sf j}(A)$ of all pairs $ (q,p )\in  J\times J$ such that $A$ maps $V_{q}$ continuously into $V_{p}$
   Thus the operator $A$ may be identified with   the collection of its representatives,
\be\label{eqj(A)}
A \simeq \{ A_{pq}: V_{q} \to V_{p} : (q,p ) \in  {\sf j}(A)\}.
\en
It is important to notice that  an operator is uniquely determined by \emph{any} of its representatives, in virtue  of Property (iii): there are no extensions for \pip\ operators.

 We will also need the following sets:
\vspace*{-2mm}\begin{align*}
{\sf d}(A) &= \{ q \in J : \mbox{there is a } \,   p \; \mbox{such that}\; A_{pq} \;\mbox{exists} \},
\\
{\sf i}(A) &= \{ p \in J : \mbox{there is a } \, q \; \mbox{such that}\; A_{pq} \;\mbox{exists} \}.
\end{align*}
\\[-4mm]
The following properties are immediate:
\bei
\item [{\bf .}]
${\sf d}(A)$ is an initial subset of $J$:  if $q \in {\sf d}(A)$ and $q' < q$, then $q' \in {\sf d}(A)$, and $A_{pq'} = A_{pq}E_{qq'}$,
 where  $E_{qq'}$ is a representative of the unit operator.

\item [{\bf .}]
${\sf i}(A)$ is a final subset of $J$: if $p \in {\sf i}(A)$ and $p' > p$, then $p' \in {\sf i}(A)$ and $A_{p'q} = E_{p'p} A_{pq}$.
\eni

Although an operator may be identified with a separately continous  sesquilinear form on $V^\# \times V^\#$, or a conjugate linear  continuous map
$V^\#$ into $V$, it is more useful to keep also the \emph{algebraic operations} on operators, namely:
 \bei
\vspace*{-1mm}\item[(i)] \emph{ Adjoint:}
every $A \in\mathrm{Op}(V_J)$ has a {unique} adjoint $A\ta \in \mathrm {Op}V_J)$, defined by
\be\label{eq:adjoint}
\ip {A\ta y} {x} = \ip  {y} { Ax}   , \;\mathrm {for}\,  x \in V_q, \, q\in{\sf d}(A) \;\mathrm {and }\;\, y\in V_{\ov{p}}, \, p \in{\sf i}(A),
\en
that is,    $(A\ta)_{\ov{q}\ov{p}} = (A_{pq})' $, where $(A_{pq})': V_{\ov{p}} \to  V_{\ov{q}}$  is the  adjoint map  of $A_{pq}$.
Furthermore,  one has $A\ta{}\ta = A, $ for every $ A \in {\rm Op}(V_J)$: no extension is allowed, by the maximality condition (iii)  of the definition.

\item[(ii)]
\emph{Partial multiplication:}
 Let $A, B \in   \mathrm{Op}(V_J )$. We say that the product $BA$ is defined if and only  if
there is a $r \in{\sf i}(A) \cap{\sf d}(B)$, that is, if and only if   there is a continuous factorization through some $V_r$:
\be\label{eq:mult}
V_q \; \stackrel{A}{\rightarrow} \; V_r \; \stackrel{B}{\rightarrow} \; V_p , \quad\mbox{{i.e.},} \quad  (BA)_{pq} = B_{pr} A_{rq}, \,\mbox{ for some } \;
q \in{\sf d}(A) , p\in {\sf i}(B).
\en
\eni
{Of particular interest are \emph{symmetric} operators, defined as those operators satisfying the relation
$A\ta  =  A$, since these are the ones that could generate self-adjoint operators in the central \hs, for instance by the celebrated KLMN theorem, suitably generalized to the \pip\ environment \cite[Section 3.3]{pip-book}.}

\section*{Acknowledgement}
This work was partly supported 
by the Istituto Nazionale di Alta Matematica (GNAMPA project ``Propriet\`a spettrali di quasi *-algebre di operatori"). JPA acknowledges gratefully
the hospitality of the Dipartimento di Matematica e Informatica, Universit\`a di Palermo, whereas  CT
acknowledges that of the Institut de Recherche en Math\'ematique et  Physique, Universit\'e catholique de Louvain.

\end{document}